\numberwithin{equation}{section}
\newtheorem{thm}{Theorem}[section]
\newtheorem{cor}[thm]{Corollary}
\newtheorem{lem}[thm]{Lemma}
\newtheorem{prop}[thm]{Proposition}
\newtheorem{defn}[thm]{Definition}
\newtheorem{exam}[thm]{Example}
\newtheorem{rem}[thm]{Remark}
\newtheorem{no}[thm]{Notation}
\newcommand{\Ann}{\mbox{Ann}\,}
\newcommand{\coker}{\mbox{Coker}\,}
\newcommand{\Spec}{\mbox{Spec}\,}
\newcommand{\Ker}{\mbox{Ker}\,}
\newcommand{\Ass}{\mbox{Ass}\,}
\newcommand{\Assh}{\mbox{Assh}\,}
\newcommand{\Att}{\mbox{Att}\,}
\newcommand{\nCM}{\mbox{non--CM}\,}
\newcommand{\Supp}{\mbox{Supp}\,}
\newcommand{\depth}{\mbox{depth}\,}
\renewcommand{\dim}{\mbox{dim}\,}
\renewcommand{\Im}{\mbox{Im}\,}
\newcommand{\Min}{\mbox{Min}\,}
\newcommand{\h}{\mbox{ht}\,}
\renewcommand{\H}{\mbox{H}}
\newcommand{\V}{\mbox{V}}
\newcommand{\CM}{\mbox{CM}}
\newcommand{\fa}{\mathfrak{a}}
\newcommand{\fm}{\mathfrak{m}}
\newcommand{\fp}{\mathfrak{p}}
\newcommand{\fq}{\mathfrak{q}}
\newcommand{\fP}{\mathfrak{P}}
\newcommand{\fQ}{\mathfrak{Q}}
\newcommand{\fr}{\mathfrak{r}}
\newcommand{\C}{\mathcal{C}}
\newcommand{\mH}{\mathcal{H}}
\begin{document}

\bibliographystyle{amsplain}
\author{Mohammad T. Dibaei}
\address{ Mohammad T. Dibaei\\Faculty of Mathematical Sciences and Computer, Tarbiat Moallem
University, Tehran, Iran, and School of Mathematics, Institute for
Research in Fundamental Sciences (IPM) P. O. Box: 19395-5746,
Tehran, Iran.}

\email{dibaeimt@ipm.ir}

\author{Raheleh Jafari}
\address{Raheleh Jafari\\Faculty of Mathematical Sciences and Computer, Tarbiat Moallem
University, Tehran, Iran.}

\email{r.jafari@tmu.ac.ir}

\subjclass[2000]{13D25; 13D45; 13C14}
\title[ Cohen-Macaulay Locus ]
{Cohen-Macaulay Loci of modules}

\keywords{Cohen-Macaulay locus, Cousin complexes, uniform
local cohomological annihilator, local cohomology.\\
The research of the first author was in part supported from IPM (No.
 87130117).}

\subjclass[2000]{13D25; 13D45; 13C14}

\maketitle

\begin{abstract} The Cohen-Macaulay locus of any finite module over
a noetherian local ring $A$ is studied and it is shown that it is a
Zariski-open subset of $\Spec A$ in certain cases. In this
connection, the rings whose formal fibres over certain prime ideals
are Cohen-Macaulay are studied.\end{abstract}
\section{Introduction}
Assume that $M$ is a module of finite dimension $d$ over a
noetherian ring $A$. For an ideal $\fa$ of $A$, denote $\H_\fa^i(M)$
as the $i$th local cohomology module of $M$ with respect to $\fa$ to
be the $i$th right derived functor of the section functor
$\Gamma_\fa(M):= \{x\in M: \fa^nx= 0$ for some positive integer $n
\}$.

 The Cohen-Macaulay locus of $M$ is denoted
by\begin{center} $\CM(M):=\{\fp\in\Spec A: M_\fp$ is Cohen-Macaulay
as $A_\fp$--module$\}$.\end{center} Trivially the Cohen-Macaulay
locus of a Cohen-Macaulay module is $\Spec A$ and  of a generalized
Cohen-Macaulay (g.CM for short) module $M$ over a local ring $(A,
\fm)$ (i.e. each local cohomology module $\H_\fm^i(M)$ has finite
length for all $i< d$) contains $\Spec A\setminus\{\fm\}$ (c.f.
\cite[Exercise 9.5.7]{BS}). In these cases $\CM(M)$  are
Zariski-open subset of $\Spec A$.

The topological property of the Cohen-Macaulay loci of modules is an
important tool. In \cite[Theorem 8.3]{K}, T. Kawasaki shows that
when the ring $A$ is catenary, the openness of $\CM(B)$ of any
finite $A$--algebra $B$ is a crucial assumption if one expects all
equidimensional finite $A$--module $M$ (i.e. $\dim_A(M)= \dim A/\fp$
for all minimal prime $\fp\in\Supp_A(M)$) have finite Cousin
complexes (see Notation 1.1).

The Cohen-Macaulay loci of modules have been studied by many
authors. Grothendieck in \cite{G} states that $\CM(M)$ is an open
subset of $\Spec A$ whenever $A$ is an excellent ring.  In
\cite{RS}, C. Rotthaus and L. $\c{S}$ega  study the Cohen-Macaulay
loci of graded modules over a noetherian homogeneous graded ring
$A=\bigoplus_{i\in\mathbb{N}}A_i$ considered as $A_0$--modules. In
\cite{H}, Grothendieck shows that $\CM(A)$ is open when $A$
possesses a dualizing complex. In \cite[Corollary 2.3]{D}, the first
author shows that $\CM(A)$ is open when $A$ is a local ring all of
its formal fibres are Cohen-Macaulay and satisfying the Serre
condition $(S_2)$, the condition which is superfluous (see Remark
3.4).

On the other hand  Sharp and Schenzel in \cite[Example 4.4]{SSc}
show that $M$
 is Cohen-Macualay if and only if the Cousin complex of $\mathcal{C}_A(M)$ is
 exact. Thus $\CM(M)= \Spec A\setminus \cup_{i\geq
 -1}\Supp_A(H^i(\mathcal{C}_A(M)))$, where $H^i(\mathcal{C}_A(M))$
 denotes the $i$th cohomology module of $\mathcal{C}_A(M)$.
  This fact simply implies that $\CM(M)$ is open whenever $\mathcal{C}_A(M)$ has finite cohomology modules.
   The authors in \cite[Theorem 2.7]{DJ} show that if $\mathcal{C}_A(M)$ has finite cohomology modules then $M$ has
    a uniform local cohomological annihilator (i.e. there exists an element
    $x\in A\setminus \underset{\fp\in\Min_A(M)}{\cup}\fp$
     such that $x\H_\fm^i(M)= 0$ for all $i< \dim_A(M)$ for all maximal ideals $\fm$ of $A$).

   Section 2 is devoted to study the set of attached primes,  $\Att_A(\H_\fm^i(M))$, of $i$th
   local cohomology module of $M$, over a local ring $(A, \fm)$,
   with respect to $\fm$. The set $\Att_A(\H_\fm^{d-1}(M))$ is
   determined, as an analogy of the formula
   $\Att_A(\H_\fm^d(M))=\Assh_A(M)$, under assumption that
   $\mathcal{C}_A(M)$ is finite (Proposition 2.3 and Corollary 2.6).

   In Section 3, we discuss about the Cohen-Macaulay locus of $M$.
  Lemma \ref{3.2} shows that in order to study openness of $\CM(M)$, in the case $A/0:_A M$ is catenary, it is
   sufficient to assume that $M$ is equidimensional. A new characterization
   of generalized Cohen-Macaulay rings is given in terms of uniform local cohomological annihilators (Corollary 3.12). As
   non--Cohen-Macaulay locus of $M$ is closed if and only if the set
   of its minimal members is finite, under some mild assumptions we prove
   that the set of minimal members of
   non--$\CM(M)$ is a subset of the union of $\Att_A(\H_\fm^i(M))$,
   $0\leq i\leq d$ and non--$\CM(A)$ (Theorem 3.13). As a result we
   show that
   the Cohen-Macaulay locus of any finite module over
a noetherian local ring $A$ is a Zariski-open subset of $\Spec A$ if
$A$ is catenary and the non--Cohen-Macaulay locus of $A$ is a finite
set (Corollary 3.14).

In Section 4, we prove that $M$ has a uniform local cohomological
annihilator if and only if $\widehat{M}$ is equidimensional and the
formal fibres over minimal members of $\Supp_A(M)$ are
Cohen-Macaulay (Theorem 4.2). As a result we obtain the following
interesting equivalence (Corollary 4.3):
\begin{itemize}
\item[(i)] $A$ is universally catenary  ring and all of its formal fibres are Cohen-Macaulay.
\item[(ii)] The Cousin complex $\mathcal{C}_{A}(A/\fp)$ is finite for all $\fp\in\Spec(A)$.
\item[(iii)]  $A/\fp$ has
a uniform local cohomological annihilator for all $\fp\in\Spec(A)$.
\end{itemize}
Moreover, the defining ideal of non-Cohen-Macaulay locus of $M$ is
given concretely under the finiteness of $\mathcal{C}_A(M)$
(Corollary 4.5).

\begin{no}\label{not2}
\emph{Let $M$ be an $A$--module and let $\mathfrak{H}= \{ H_l: l\geq
0\}$ be the family of subsets of $\Supp_A(M)$ with $H_l=
\{\fp\in\Supp_A(M): \dim_{A_\fp}(M_{\fp})\geq l\}$. The family
$\mathfrak{H}$ is called the $M$--height filtration of $\Supp_A(M)$.
Recall that the Cousin complex of $M$ is the complex
\begin{equation} \mathcal{C}_A(M):
0\overset{d^{-2}}{\longrightarrow}M^{-1}\overset{d^{-1}}{\longrightarrow}
M^0\overset{d^{0}}{\longrightarrow}M^1\overset{d^{1}}{\longrightarrow}
\cdots\overset{d^{l-1}}{\longrightarrow}M^l\overset{d^{l}}{\longrightarrow}
M^{l+1}\longrightarrow\cdots, \end{equation} where $M^{-1}= M$,
$M^l= \underset{\fp\in H_l\setminus H_{l+1}}{\oplus}(\coker
d^{l-2})_\fp$ for $l>-1$. The homomorphism $d^l: M^l\longrightarrow
M^{l+1}$ has the following property: for $m\in M^l$ and $\fp\in
H_l\setminus H_{l+1}$, the component of $d^l(m)$ in $(\coker
d^{l-1})_\fp$ is $\overline{m}/1$, where \  $\bar{} :
M^l\longrightarrow \coker d^{l-1}$ is the natural map (see \cite{S1}
for details). We choose the notations $$K^l:= \Ker d^l, D^l:= \Im
d^{l-1}, \mH^l_M:= K^l/D^l, l=-1, 0, \cdots.$$ We call the Cousin
complex $\mathcal{C}_A(M)$ finite whenever each $\mH^l_M$ is finite
as $A$--module. We have the following natural exact sequences:
\begin{equation} 0\longrightarrow M^{l-1}/K^{l-1}\longrightarrow
M^l\longrightarrow M^l/D^l\longrightarrow 0,
\end{equation}
\begin{equation} 0\longrightarrow \mH^{l-1}_M\longrightarrow
M^{l-1}/D^{l-1}\longrightarrow M^{l-1}/K^{l-1}\longrightarrow 0,
\end{equation}
for all $l\geq -1$.}
\end{no}

\section{Attached primes of local cohomology modules}
In this section we assume that ($A, \fm$) is local and that $M$ is a
finite $A$--module of dimension $d$. Although the main purpose of
this section is to find the main tool Corollary 2.6 to be used in
the proof of Lemma 3.11, the results are about the relations between
the set of attached primes of local cohomology modules $\H_\fm^i(M)$
and the cohomologies of the Cousin complex of $M$, which are
interesting on their own.

 The following result explains the situation
where all cohomology modules
 $\mH^i_M$ of the Cousin complex of $M$ are local cohomology
modules of $M$.

\begin{lem}\label{1.3}
Assume that $(A, \fm)$ is a local ring and that $i$ is an integer
 with $0\leq i< d$. The following statements are equivalent.
\begin{enumerate}
\item[(i)] $\emph\dim_A(\mH^{j}_M)\leq 0$ for all $j$ with $-1\leq j< i$.
\item[(ii)]  $\emph\H^{j+1}_\fm(M)\cong\mH^{j}_M$ for all $j$ with $-1\leq j< i$.
\end{enumerate}
\end{lem}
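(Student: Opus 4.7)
The plan is to exploit the long exact sequences in $\fm$-local cohomology associated with two natural short exact sequences attached to the Cousin complex: $(S_l)\colon 0\to K^l\to M^l\to D^{l+1}\to 0$, arising because $d^l\colon M^l\to M^{l+1}$ has kernel $K^l$ and image $D^{l+1}$ (with $M^{-1}=M$, so $K^{-1}=\mH^{-1}_M$ in view of $D^{-1}=0$), and $(T_l)\colon 0\to D^l\to K^l\to \mH^l_M\to 0$, coming from the definition $\mH^l_M=K^l/D^l$, for every $l\geq -1$.

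The key technical input is that $\H^k_\fm(M^l)=0$ for every $k\geq 0$ whenever $l<d$. Indeed, $M^l$ is a direct sum of modules $(\coker d^{l-2})_\fp$ with $\fp\in H_l\setminus H_{l+1}$, so $\dim M_\fp=l<d=\dim M_\fm$ forces $\fp\subsetneq\fm$; any $x\in\fm\setminus\fp$ then acts invertibly on each summand, while every element of its $\fm$-local cohomology is annihilated by some power of $\fm$, and in particular by some power of $x$, forcing those groups to vanish.

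For (i) $\Rightarrow$ (ii), the hypothesis $\dim\mH^l_M\leq 0$ yields $\H^k_\fm(\mH^l_M)=0$ for $k\geq 1$, so the LES of $(T_l)$ collapses to $\H^k_\fm(D^l)\cong\H^k_\fm(K^l)$ for $k\geq 2$ together with a corresponding sequence in low degrees; when additionally $l<d$, the LES of $(S_l)$ combined with the vanishing just noted gives $\H^0_\fm(K^l)=0$ and $\H^k_\fm(K^l)\cong\H^{k-1}_\fm(D^{l+1})$ for $k\geq 1$. Splicing these, for every $0\leq l<i$ one obtains the shift isomorphism $\H^k_\fm(D^l)\cong\H^{k-1}_\fm(D^{l+1})$ when $k\geq 2$, together with a short exact sequence $0\to\mH^l_M\to\H^1_\fm(D^l)\to\H^0_\fm(D^{l+1})\to 0$. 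Applying $(S_{-1})$ with $\H^{\geq 1}_\fm(\mH^{-1}_M)=0$ identifies $\H^{j+1}_\fm(M)\cong\H^{j+1}_\fm(D^0)$; iterated shifts bring this to $\H^1_\fm(D^j)$, and the low-degree sequence at $l=j$ together with $\H^0_\fm(D^{j+1})\subseteq\H^0_\fm(M^{j+1})=0$ (valid because $j+1<d$) yields $\H^{j+1}_\fm(M)\cong\mH^j_M$ for $0\leq j<i$. The base case $j=-1$ follows at once from $(S_{-1})$ and $\H^0_\fm(D^0)\subseteq\H^0_\fm(M^0)=0$.

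The reverse implication (ii) $\Rightarrow$ (i) is immediate: any isomorphism $\mH^j_M\cong\H^{j+1}_\fm(M)$ realizes $\mH^j_M$ as an $\fm$-torsion $A$-module, forcing $\Supp_A(\mH^j_M)\subseteq\{\fm\}$ and hence $\dim_A(\mH^j_M)\leq 0$. The main obstacle is the index bookkeeping in the forward direction: every step in the chain of isomorphisms simultaneously requires $l<d$ (to guarantee $\H^\ast_\fm(M^l)=0$) and $\dim\mH^l_M\leq 0$, and the terminal vanishing $\H^0_\fm(D^{j+1})=0$ specifically relies on $j+1<d$, which is exactly where the hypothesis $i<d$ is consumed.
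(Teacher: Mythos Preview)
Your proof is correct and follows essentially the same strategy as the paper: iterate a degree-shift isomorphism along the Cousin complex, using the vanishing of $\H^*_\fm(M^l)$ for $l<d$, and terminate with the identification $\H^0_\fm(\mH^j_M)=\mH^j_M$. The only cosmetic differences are that you work with the submodule sequences $0\to K^l\to M^l\to D^{l+1}\to 0$ and $0\to D^l\to K^l\to \mH^l_M\to 0$, whereas the paper uses the quotient sequences (1.2) and (1.3) involving $M^l/K^l$ and $M^l/D^l$ (so your shifts run through $\H^k_\fm(D^l)$ while the paper's run through $\H^t_\fm(M^{s}/D^{s})$), and that you supply a direct argument for $\H^k_\fm(M^l)=0$ when $l<d$ where the paper simply cites Sharp's theorem \cite{S2}.
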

\begin{proof}
Assume that $s$ is an integer such that $0\leq s<d$ and
$\dim_A(\mH_M^{s-1})\leq 0$. Consider the exact sequence (1.2) with
$l= s$ which gives the exact sequence
$$\H_\fm^{t-1}(M^{s})\longrightarrow\H_\fm^{t-1}
(M^{s}/D^{s})\longrightarrow
\H_\fm^{t}(M^{s-1}/K^{s-1})\longrightarrow\H_\fm^{t}(M^{s})$$ for
all integers $t$. As $s<d$, by \cite[Theorem]{S2}, we get
\begin{equation}\H_\fm^{t-1} (M^{s}/D^{s})\cong
\H_\fm^{t}(M^{s-1}/K^{s-1}).\end{equation}

 Next consider the
exact sequence
(1.3) with $l= s$ which gives the exact sequence
$$\H_\fm^{t}(\mH_M^{s-1})\longrightarrow\H_\fm^{t}
(M^{s-1}/D^{s-1})\longrightarrow
\H_\fm^{t}(M^{s-1}/K^{s-1})\longrightarrow\H_\fm^{t+1}(\mH_M^{s-1}).$$
Choosing $t>0$ in the above exact sequence we obtain that
\begin{equation}\H_\fm^{t} (M^{s-1}/D^{s-1})\cong \H_\fm^{t}(M^{s-1}/K^{s-1}).\end{equation}
As a consequence, from (2.1) and (2.2), we get
\begin{equation}\H_\fm^t(M^{s-1}/D^{s-1})\cong\H_\fm^{t-1}(M^{s}/D^{s})\end{equation}
for all $t> 0$.

(i)$\Rightarrow$ (ii). Let $-1\leq j< i$. By repeated use of (2.3),
we get $\H_\fm^{j+1}(M^{-1}/D^{-1})\cong \H_\fm^0(M^{j}/D^{j})$.
From the exact sequence (1.2) with $l= j+1$ we have
$\H_\fm^0(M^{j}/K^{j})= 0$ (because $j+1\leq i< d$ and
\cite[Theorem]{S2}). Hence the exact sequence (1.3) with $l= j+1$
implies that $ \H_\fm^0(M^j/D^j) \cong \H_\fm^0(\mH_M^j)\cong
\mH_M^j$. Therefore $\H_\fm^{j+1}(M)\cong \mH_M^j$.

(ii)$\Rightarrow$ (i) is clear.
\end{proof}


 Recall that an artinian
$A$--module $N$ admits a reduced secondary representation
$N=N_1+\cdots+N_r$ so that $\fp_i= \Ann_A(N/N_i)$ is a prime ideal
of $A$, $ 1\leq i\leq r$. Denote the set of attached primes of $N$
by $\Att_A(N)=\{\fp_1, \ldots, \fp_r\}$. Each $N_i$ is
called a $\fp_i$--secondary module.\\

\begin{lem}
Assume that $(A, \fm)$ is local and that $0\leq t< d$ is an integer
such that $\emph\dim_A(\mathcal{H}_M^i)\leq t-i-1$, for all $i\geq
-1$ ( $t= d-1$ is an example, see \cite[2.7 (vii)]{S1}). The
following statements hold true.
\begin{itemize}
\item[(i)]
$\emph\Att_A(\emph\H^t_\fm(M))\subseteq
\underset{i=-1,\ldots,t-1}{\bigcup}\emph\Att_A(\emph\H^{t-i-1}_\fm(\mH_M^i)).$

\item[(ii)] There is an epimorphism $\emph\H_\fm^t(M)\twoheadrightarrow\emph\H_\fm^0(\mH_M^{t-1})$.
\item[(iii)] Assume that $\mathcal{C}_A(M)$ is finite. Then $\mH_M^{t-1}$ is non-zero if and only if
$\fm\in\emph\Att_A(\emph\H^t_\fm(M))$.
\end{itemize}
\end{lem}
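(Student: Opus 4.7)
The plan is to iterate the long exact sequences coming from (1.2) and (1.3), mimicking the proof of Lemma \ref{1.3} but carrying the weaker dimension bound $\dim \mH_M^i \leq t-i-1$ through the argument. The key input is Sharp's vanishing \cite[Theorem]{S2}, which amounts to $\H_\fm^j(M^l) = 0$ for all $j \geq 0$ when $l < d$ (every summand $(\coker d^{l-1})_\fp$ of $M^l$ is killed by $\H_\fm^*$ because $\fp \neq \fm$ whenever $l < d$). For such $l$, (1.2) becomes the string of isomorphisms
$$\H_\fm^{s-1}(M^l/D^l) \cong \H_\fm^s(M^{l-1}/K^{l-1}), \qquad s \geq 0.$$
Feeding this into the long exact sequence obtained from (1.3) and specialising $s = t-l$ produces
$$\H_\fm^{t-l}(\mH_M^{l-1}) \lo \H_\fm^{t-l}(M^{l-1}/D^{l-1}) \lo \H_\fm^{t-l-1}(M^l/D^l) \lo \H_\fm^{t-l+1}(\mH_M^{l-1}),$$
in which the hypothesis $\dim \mH_M^{l-1} \leq t-l$ kills the last term, so the middle arrow is surjective with kernel a quotient of $\H_\fm^{t-l}(\mH_M^{l-1})$.

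Running $l = 0, 1, \ldots, t$ assembles these pieces into a chain of surjections
$$\H_\fm^t(M) \twoheadrightarrow \H_\fm^{t-1}(M^0/D^0) \twoheadrightarrow \cdots \twoheadrightarrow \H_\fm^0(M^{t-1}/D^{t-1}).$$
For (i), I will feed the short exact sequence at each stage into the standard fact $\Att N \subseteq \Att N' \cup \Att N''$ for an extension of artinian modules; after reindexing $i = l-1$, step $l$ contributes precisely $\Att \H_\fm^{t-i-1}(\mH_M^i)$, and the union over $l \in \{0,\ldots,t\}$ gives the claimed set. For (ii), at the terminal index $l = t$ the term $\H_\fm^0(M^{t-1}/K^{t-1})$ already vanishes by (1.2) together with Sharp's theorem, so left-exactness applied to (1.3) upgrades the injection $\mH_M^{t-1} \hookrightarrow M^{t-1}/D^{t-1}$ to an isomorphism on $\H_\fm^0$; composing with the chain of surjections yields the required epimorphism $\H_\fm^t(M) \twoheadrightarrow \H_\fm^0(\mH_M^{t-1})$.

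Under the hypothesis of (iii), $\mH_M^{t-1}$ is a finite $A$-module of dimension at most $0$, hence of finite length, so $\mH_M^{t-1} \neq 0$ is equivalent to $\H_\fm^0(\mH_M^{t-1}) = \mH_M^{t-1} \neq 0$ with unique attached prime $\fm$. The implication $\mH_M^{t-1} \neq 0 \Rightarrow \fm \in \Att \H_\fm^t(M)$ then drops out of (ii). Conversely, if $\fm \in \Att \H_\fm^t(M)$, part (i) places $\fm$ in some $\Att \H_\fm^{t-i-1}(\mH_M^i)$; when $i < t-1$, finiteness of $\mH_M^i$ together with the bound $\dim \mH_M^i \leq t-i-1$ makes $\H_\fm^{t-i-1}(\mH_M^i)$ its top local cohomology, whose attached primes lie in $\Assh \mH_M^i$ and all have coheight $t-i-1 > 0$, excluding $\fm$. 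Hence $i = t-1$ and $\mH_M^{t-1} \neq 0$.

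The hard part will be the bookkeeping that simultaneously validates the vanishing $\H_\fm^{t-l+1}(\mH_M^{l-1}) = 0$ (supplied by the dimension hypothesis) and the vanishing $\H_\fm^*(M^l) = 0$ (supplied by $l < d$ and Sharp) at every iteration; the assumptions $0 \leq t < d$ and $\dim \mH_M^i \leq t-i-1$ are calibrated precisely to make both inputs available for $l = 0, 1, \ldots, t$, after which (i)--(iii) emerge in parallel from the same induction.
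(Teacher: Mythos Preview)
Your proposal is correct and follows essentially the same route as the paper: both iterate the long exact sequences from (1.2) and (1.3), invoking Sharp's vanishing $\H_\fm^*(M^l)=0$ for $l<d$ together with the Grothendieck vanishing $\H_\fm^{t-l+1}(\mH_M^{l-1})=0$ from the dimension bound, to build the chain of surjections and the attached-prime containments simultaneously. The only cosmetic difference is that you index the iteration via $\H_\fm^{t-l}(M^{l-1}/D^{l-1})$ while the paper tracks $\H_\fm^{t-j-1}(M^j/K^j)$; these are identified by the (1.2)-isomorphism, and your treatment of (iii) via Macdonald--Sharp on the top local cohomology is the same as the paper's argument that $\fm\in\Assh_A(\mH_M^i)$ forces $t-i-1=0$.
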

\begin{proof}
(i).  We prove by induction on $j$, $-1\leq j\leq t-1$, that
\begin{equation}
\Att_A(\H_\fm^t(M))\subseteq\bigcup_{i\geq-1}^j
\Att_A(\H_\fm^{t-i-1}(\mH_M^i))\bigcup\Att_A(\H_\fm^{t-j-1}(M^{j}/K^{j})).\end{equation}
Due to  $\dim_A(\mH_M^i)\leq t-i-1$, the Grothendieck vanishing
theorem implies that $\H_\fm^{t-i}(\mH_M^i)= 0$. The exact sequence
(1.3) with $l= 0$ implies the exact sequence
$\H_\fm^t(\mH_M^{-1})\longrightarrow\H_\fm^t(M)\longrightarrow\H_\fm^t(M^{-1}/K^{-1})\longrightarrow
0$. Thus we get
$$\Att_A(\H_\fm^t(M))\subseteq\Att_A(\H_\fm^t(\mH_M^{-1}))\bigcup\Att_A(\H_\fm^t(M^{-1}/K^{-1})).$$
Assume that $-1\leq j< t-1$ and (2.4) holds. First note that (1.2)
with $l= j+1$  implies the exact sequence
$$\H_\fm^{t-j-2}(M^{j+1})\longrightarrow\H_\fm^{t-j-2}(M^{j+1}/D^{j+1})\longrightarrow\H_\fm^{t-j-1}(M^j/K^j)
\longrightarrow\H_\fm^{t-j-1}(M^{j+1}).$$ As $-1\leq j< t-1$,
$\H_\fm^{t-j-2}(M^{j+1})= 0$ and $\H_\fm^{t-j-1}(M^{j+1})= 0$ (c.f.
\cite [Theorem]{S2}). Therefore
\begin{equation}
\H_\fm^{t-j-2}(M^{j+1}/D^{j+1})\cong\H_\fm^{t-j-1}(M^j/K^j).
\end{equation}
 On the other hand from the exact sequence (1.3) with $l= j+2$ we have the exact
sequence
\begin{equation}
\H_\fm^{t-j-2}(\mH_M^{j+1})\longrightarrow\H_\fm^{t-j-2}(M^{j+1}/D^{j+1})
\longrightarrow\H_\fm^{t-j-2}(M^{j+1}/K^{j+1})\longrightarrow\H_\fm^{t-j-1}(\mH_M^{j+1}).\end{equation}
As $\H_\fm^{t-j-1}(\mH_M^{j+1})= 0$, (2.5) with the exact sequence
(2.6) imply that
\begin{equation}
\begin{array}{ll}\Att_A(\H_\fm^{t-j-1}(M^{j}/K^{j}))&=
\Att_A(\H_\fm^{t-j-2}(M^{j+1}/D^{j+1}))\\&\subseteq
\Att_A(\H_\fm^{t-j-2}(\mH_M^{j+1}))\bigcup\Att_A(\H_\fm^{t-j-2}(M^{j+1}/K^{j+1})).\end{array}
\end{equation}
Now, (2.7) and (2.4) complete the induction argument. Thus we have
$$\Att_A(\H^t_\fm(M))\subseteq\underset{i= -1, 0, \cdots,
t-1}{\bigcup}\Att_A(\H_\fm^{t-i-1}(\mH_M^i))\bigcup\Att_A(\H_\fm^0(M^{t-1}/K^{t-1})).$$
On the other hand, considering the fact that $\H_\fm^0(M^t)= 0$, it
follows from the exact sequence (1.2) with $l= t$ that
$\H_\fm^0(M^{t-1}/K^{t-1})= 0$.

(ii). Consider the exact sequence (1.2) with $l= t-i$
 which imply the exact sequence
$$\H_\fm^{i-1}(M^{t-i})\longrightarrow\H_\fm^{i-1}(M^{t-i}/D^{t-i})\longrightarrow\H_\fm^{i}(M^{t-i-1}/K^{t-i-1})
\longrightarrow\H_\fm^{i}(M^{t-i}).$$ Taking $0\leq i\leq t$ we get
$0\leq t-i\leq t< d$ and so $\H_\fm^{i}(M^{t-i})= 0 =
\H_\fm^{i-1}(M^{t-i})$ (c.f. \cite[Theorem]{S2}). Therefore we have
isomorphisms
\begin{equation}
\H_\fm^{i-1}(M^{t-i}/D^{t-i})\cong
\H_\fm^{i}(M^{t-i-1}/K^{t-i-1})\end{equation} for all $i$, $0\leq i
\leq t$.

Consider the exact sequence (1.3) with $l= t-i$ which induces the
exact sequence
$$\H_\fm^i(\mH_M^{t-i-1})\longrightarrow\H_\fm^i(M^{t-i-1}/D^{t-i-1})\longrightarrow
\H_\fm^i(M^{t-i-1}/K^{t-i-1})\longrightarrow\H_\fm^{i+1}(\mH_M^{t-i-1}).$$
As, by assumption $\dim_A(\mH_M^{t-i-1})\leq i$, we have
$\H_\fm^{i+1}(\mH_M^{t-i-1})= 0$ and so one obtains an epimorphism
\begin{equation}
\H_\fm^i(M^{t-i-1}/D^{t-i-1})\twoheadrightarrow\H_\fm^i(M^{t-i-1}/K^{t-i-1}).
\end{equation}
By successive use of (2.9) and (2.8) one obtains an epimorphism

$$\H_\fm^t(M^{-1}/D^{-1})\twoheadrightarrow\H_\fm^0(M^{t-1}/D^{t-1}).
$$
 On the other hand, we have seen at the end of part (i)
that, we have
 $\H_\fm^0(M^{t-1}/K^{t-1})= 0$. Therefore, from  the exact sequence (1.3) with $l= t$,
 we get $\H_\fm^0(\mH_M^{t-1})\cong\H_\fm^0(M^{t-1}/D^{t-1})$
which
 results an epimorphism
 \begin{equation}
 \H_\fm^t(M)\twoheadrightarrow\H_\fm^0(\mH_M^{t-1}).\end{equation}

 (iii). Assume that $\mH_M^{t-1}\not =0$. As, by assumption
 $\dim_A(\mH_M^{t-1})\leq 0$,  we have
 $\H_\fm^0(\mH_M^{t-1})= \mH_M^{t-1}$ and so
 $\Att_A(\H_\fm^0(\mH_M^{t-1}))=\{\fm\}$. Now (2.10) implies
 that $\fm\in\Att_A(\H_\fm^t(M))$.

 Conversely, assume that $\fm\in\Att_A(\H_\fm^t(M))$. By part (i),
 $\fm\in\Att_A(\H_\fm^{t-i-1}(\mH_M^i))$ for some $i$, $-1\leq i\leq
 t-1$, and thus $\dim_A(\mH_M^i)\geq t-i-1$. As $\dim_A(\mH_M^i)\leq
 t-i-1$ we have equality $\dim_A(\mH_M^i)=
 t-i-1$. Note that $\mH_M^i$ is finite and so $\fm\in\Assh_A(\mH_M^i)$ (see \cite[Theorem 2.2]{MS})
 from which it follows
 that $t-i-1= 0$, i.e. $\mH_M^{t-1}\not=0.$
\end{proof}

It is well-known that $\Att_A(\H_\fm^d(M))= \Assh_A(M)$ (
\cite[Theorem 2.2]{MS}). The following result provides some
information about $\Att_A(\H_\fm^t(M))$ for certain $t$, in
particular for $t= d-1$.

 \begin{prop}
 Assume that $(A, \fm)$ is local and that $\mathcal{C}_A(M)$ is finite. Let $t$, $0\leq t<
d$, be an integer such that $\emph\dim_A(\mH_M^i)\leq t-i-1$, for
all $i\geq -1$ (e.g. $t= d-1$, see \cite[(2.7) (vii)]{S1}). Then

 $$\emph\Att_A(\emph\H^{t}_\fm(M))=\bigcup^{t-1}_{i=-1}\{\fp\in\emph\Ass_A(\mH_M^i): \emph\dim(A/\fp)=t-i-1\}.$$
\end{prop}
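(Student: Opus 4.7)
The plan is to prove the proposed equality by establishing the two inclusions separately.

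For the inclusion $\Att_A(\H^t_\fm(M)) \subseteq \bigcup_{i=-1}^{t-1}\{\fp \in \Ass_A(\mH^i_M) : \dim(A/\fp) = t-i-1\}$, I will invoke Lemma 2.2(i), which yields $\Att_A(\H^t_\fm(M)) \subseteq \bigcup_{i=-1}^{t-1} \Att_A(\H^{t-i-1}_\fm(\mH^i_M))$. Under the dimension hypothesis, each $\H^{t-i-1}_\fm(\mH^i_M)$ is either zero (if $\dim(\mH^i_M) < t-i-1$, in which case $\{\fp \in \Ass_A \mH^i_M : \dim(A/\fp) = t-i-1\}$ is also empty), or the top local cohomology of the finite $A$-module $\mH^i_M$ (the finiteness following from that of $\mathcal{C}_A(M)$). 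In the nonzero case \cite[Theorem 2.2]{MS} identifies $\Att_A(\H^{t-i-1}_\fm(\mH^i_M))$ with $\Assh_A(\mH^i_M) = \{\fp \in \Ass_A(\mH^i_M) : \dim(A/\fp) = t-i-1\}$, yielding the desired containment.

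For the reverse inclusion, fix $\fp \in \Ass_A(\mH^i_M)$ with $\dim(A/\fp) = t-i-1$. Iterating the chain of isomorphisms (2.8) and surjections (2.9) from the proof of Lemma 2.2(ii) produces an epimorphism $\H^t_\fm(M) \twoheadrightarrow \H^{t-i-1}_\fm(M^i/D^i)$, so it suffices to show $\fp \in \Att_A(\H^{t-i-1}_\fm(M^i/D^i))$. From the long exact sequence arising from (1.3),
\begin{equation*}
\H^{t-i-2}_\fm(M^i/K^i) \overset{\delta}{\longrightarrow} \H^{t-i-1}_\fm(\mH^i_M) \overset{\alpha}{\longrightarrow} \H^{t-i-1}_\fm(M^i/D^i) \overset{\beta}{\longrightarrow} \H^{t-i-1}_\fm(M^i/K^i) \longrightarrow 0,
\end{equation*}
together with the vanishing $\H^{t-i}_\fm(\mH^i_M) = 0$ (by Grothendieck vanishing and the dimension hypothesis), and writing $V$ for the image of $\delta$, we obtain a short exact sequence $0 \to \H^{t-i-1}_\fm(\mH^i_M)/V \to \H^{t-i-1}_\fm(M^i/D^i) \to \H^{t-i-1}_\fm(M^i/K^i) \to 0$.

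The main obstacle is to verify that each such $\fp$ persists as an attached prime of $\H^{t-i-1}_\fm(M^i/D^i)$. I plan to pass to the $\fm$-adic completion $\widehat{A}$ and invoke Matlis duality, turning attached primes into associated primes of finite modules. The critical dimension observation is that $V$, being a quotient of $\H^{t-i-2}_\fm(M^i/K^i)$, has $\Att_A V \subseteq \{\fq : \dim(A/\fq) \leq t-i-2\}$, strictly below $\dim(A/\fp) = t-i-1$. This disjointness from $\Assh_A(\mH^i_M)$ forces $\Assh_A(\mH^i_M) \subseteq \Att_A(\H^{t-i-1}_\fm(\mH^i_M)/V)$. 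Then a careful analysis of the Matlis-dualized short exact sequence, exploiting that $(\H^{t-i-1}_\fm(\mH^i_M)/V)^\vee$ is simultaneously a quotient of $\H^{t-i-1}_\fm(M^i/D^i)^\vee$ and a submodule of $\H^{t-i-1}_\fm(\mH^i_M)^\vee$, yields $\fp \in \Ass_{\widehat{A}}(\H^{t-i-1}_\fm(M^i/D^i)^\vee) = \Att_A(\H^{t-i-1}_\fm(M^i/D^i))$, completing the proof.
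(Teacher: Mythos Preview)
Your argument for the inclusion $\Att_A(\H^t_\fm(M))\subseteq\bigcup_{i}\{\fp\in\Ass_A(\mH^i_M):\dim(A/\fp)=t-i-1\}$ is correct and is exactly the paper's argument: invoke Lemma~2.2(i) and then identify $\Att_A(\H^{t-i-1}_\fm(\mH^i_M))$ with $\Assh_A(\mH^i_M)$ via \cite[Theorem~2.2]{MS}.

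The reverse inclusion, however, has two genuine gaps. First, the ``critical dimension observation'' that $\Att_A(V)\subseteq\{\fq:\dim(A/\fq)\leq t-i-2\}$ is not justified. You deduce it from $V$ being a quotient of $\H^{t-i-2}_\fm(M^i/K^i)$, implicitly assuming that attached primes of $\H^j_\fm(-)$ always satisfy $\dim(A/\fq)\leq j$. There is no such general principle, and $M^i/K^i$ is not even finitely generated, so the standard tools (local duality, shifted localization) are unavailable. Tracing $\H^{t-i-2}_\fm(M^i/K^i)$ through the isomorphisms of type~(2.8) and the sequences of type~(2.6) does not give a clean bound either, because in those sequences the relevant pieces appear as \emph{submodules} of artinian modules, and $\Att$ of a submodule is not controlled by $\Att$ of the ambient module. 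Second, even granting $\fp\in\Att_A(\H^{t-i-1}_\fm(\mH^i_M)/V)$, your passage to $\fp\in\Att_A(\H^{t-i-1}_\fm(M^i/D^i))$ fails: in the dualized sequence $0\to Q^\vee\to W^\vee\to U^\vee\to 0$ (with $U=\H^{t-i-1}_\fm(\mH^i_M)/V$ and $W=\H^{t-i-1}_\fm(M^i/D^i)$), the module $U^\vee$ is a \emph{quotient} of $W^\vee$, and associated primes of a quotient need not lie in $\Ass(W^\vee)$. The extra observation that $U^\vee$ embeds into $(\H^{t-i-1}_\fm(\mH^i_M))^\vee$ does not connect $U^\vee$ back to $W^\vee$ in any useful way.

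The paper avoids both obstacles by a localization argument. Given $\fp\in\Ass_A(\mH^{i_0}_M)$ with $\dim(A/\fp)=t-i_0-1$, it passes to $A_\fp$ and $M_\fp$, sets $t'=t-\dim(A/\fp)=i_0+1$, checks (using that $\mathcal C_A(M)$ finite forces $M$ equidimensional with catenary support, and that $(\mH^j_M)_\fp\cong\mH^j_{M_\fp}$) that the hypotheses of Lemma~2.2 hold for $M_\fp$ with parameter $t'$, and observes that $\mH^{t'-1}_{M_\fp}=(\mH^{i_0}_M)_\fp\neq 0$. Lemma~2.2(iii) then gives $\fp A_\fp\in\Att_{A_\fp}(\H^{t'}_{\fp A_\fp}(M_\fp))$, and the Weak General Shifted Localization Principle \cite[Exercise~11.3.8]{BS} lifts this to $\fp\in\Att_A(\H^t_\fm(M))$. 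This route reduces the question to the case where $\fp$ is maximal, where the epimorphism of Lemma~2.2(ii) lands directly on $\H^0_\fm(\mH^{t'-1}_{M_\fp})=\mH^{t'-1}_{M_\fp}$ and no delicate analysis of attached primes of intermediate (non-finite) modules is needed.
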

\begin{proof}
Assume that $\fp\in\Att_A(\H_\fm^t(M))$. Then
$\fp\in\Att_A(\H_\fm^{t-i-1}(\mH_M^i))$ for some $i$, $-1\leq i\leq
t-1$ by Lemma 2.2(i). As $\dim_A(\mH_M^i)\leq t-i-1$, we have the
equality $\dim_A(\mH_M^i)= t-i-1$ and so $\fp\in\Ass_A(\mH_M^i)$ and
$\dim (A/\fp)= t-i-1$.

 Conversely, assume that $-1\leq
i_0\leq t-1$  and that $\fp\in \Ass_A(\mH_M^{i_0})$ such that
$\dim(A/\fp)=t-i_0-1$. Set $d':=\dim_{A_\fp}(M_{\fp})$ and $t':=
t-\dim (A/\fp)$. As $\C_A(M)$ is finite, $M$ is equidimensional and
$\Supp_A(M)$ is catenary (c.f. \cite[Corollary 2.12 and Corollary
2.15]{DJ}), we have $0\leq t'< d'$. Note that
$\mathcal{C}_{A_\fp}(M_\fp)\cong (\mathcal{C}_A(M))_\fp$ so that
$(\mH_M^j)_\fp\cong\mH_{M_\fp}^j$ for all $j$. As $\dim_{A_\fp}(
\mH_{M_\fp}^j)\leq \dim_A(\mH_M^j) -\dim (A/\fp)$ , we find that
$\dim_{A_\fp}(\mH^j_{M_\fp})\leq t'-j-1$ for all $j\geq -1$. On the
other hand $\mH_{M_\fp}^{t'-1}= (\mH_M^{i_0})_\fp\not =0$. By Lemma
2.2 (iii), replacing $M$ by $M_\fp$ implies that $\fp
A_\fp\in\Att_{A_\fp}(\H_{\fp A_\fp}^{t'}(M_\fp))$. Finally the Weak
General Shifted Localization Principle \cite[Exercise 11.3.8]{BS}
implies that $\fp\in\Att_A(\H_\fm^{t'+\dim A/\fp}(M))$ that is
$\fp\in\Att_A(\H_\fm^t(M))$.
\end{proof}

\begin{cor}
 Assume that $(A, \fm)$ is a local ring, $M$ is a finite
$ A$--module and that $\mathcal{C}_A(M)$  is finite. Let $l< d$ be
an integer. The following statements are equivalent.
\begin{itemize}
\item[(i)] $\emph\H_\fm^j(M) =0$ for all $j$, $l<j< d$.
\item[(ii)] $\emph\dim_A(\mH_M^i)\leq l-i-1$ for all $i\geq -1.$
\end{itemize}
\end{cor}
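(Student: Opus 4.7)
The plan is to deduce this corollary directly from Proposition 2.3, which under the hypothesis $\dim_A(\mH_M^i)\leq t-i-1$ for all $i\geq -1$ gives
\[
\Att_A(\H_\fm^t(M)) \;=\; \bigcup_{i=-1}^{t-1}\{\fp\in\Ass_A(\mH_M^i):\dim(A/\fp)=t-i-1\}.
\]
The key point is that since $\mathcal{C}_A(M)$ is finite, each $\mH_M^i$ is a finite $A$-module, so $\dim_A(\mH_M^i)=\max\{\dim(A/\fp):\fp\in\Ass_A(\mH_M^i)\}$; and since $M$ is finite over the local ring $(A,\fm)$, every local cohomology module $\H_\fm^j(M)$ is Artinian, hence vanishes if and only if its set of attached primes is empty.

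For the implication (ii)$\Rightarrow$(i), I would fix $j$ with $l<j<d$. The hypothesis $\dim_A(\mH_M^i)\leq l-i-1$ gives in particular $\dim_A(\mH_M^i)\leq j-i-1$ for every $i\geq -1$, so Proposition 2.3 applies with $t=j$. But for any $\fp\in\Ass_A(\mH_M^i)$ one has $\dim(A/\fp)\leq\dim_A(\mH_M^i)\leq l-i-1<j-i-1$, so the displayed union is empty. Hence $\Att_A(\H_\fm^j(M))=\emptyset$ and $\H_\fm^j(M)=0$.

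For the implication (i)$\Rightarrow$(ii), I would argue by descending induction on $t$ that $\dim_A(\mH_M^i)\leq t-i-1$ for all $i\geq -1$, starting at $t=d-1$ (where the inequality holds automatically by \cite[(2.7)(vii)]{S1}) and going down to $t=l$. For the inductive step, assume the bound holds at some $t$ with $l+1\leq t\leq d-1$. Then Proposition 2.3 applies and computes $\Att_A(\H_\fm^t(M))$. Since $l<t<d$, hypothesis (i) gives $\H_\fm^t(M)=0$, so this set of attached primes is empty; thus no $\fp\in\Ass_A(\mH_M^i)$ achieves $\dim(A/\fp)=t-i-1$. Combined with $\dim_A(\mH_M^i)\leq t-i-1$ and the fact that $\dim_A(\mH_M^i)$ is realized by some associated prime, this forces $\dim_A(\mH_M^i)\leq t-i-2=(t-1)-i-1$, completing the induction. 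Taking $t=l$ yields (ii).

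The one piece that requires a moment of care rather than real difficulty is the base case of the induction: one must check that $\dim_A(\mH_M^i)\leq d-i-1$ holds unconditionally, which is exactly the content of the cited fact \cite[(2.7)(vii)]{S1}. No further obstacle is expected, since Proposition 2.3 has packaged the cohomological input and the finiteness of $\mathcal{C}_A(M)$ supplies the finiteness needed to pass between $\dim$ and associated primes.
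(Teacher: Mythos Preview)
Your proposal is correct and follows essentially the same approach as the paper: both directions are deduced from Proposition~2.3 together with the base estimate $\dim_A(\mH_M^i)\leq d-i-2$ from \cite[(2.7)(vii)]{S1}, and your descending induction for (i)$\Rightarrow$(ii) is the same argument as the paper's (the paper inducts on the parameter $l$, you on an auxiliary $t$, but the step is identical). Your treatment of (ii)$\Rightarrow$(i) is in fact slightly cleaner than the paper's, since you apply Proposition~2.3 directly for each $j$ in the range rather than setting up a separate induction.
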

\begin{proof} (i)$\Longrightarrow$(ii). We prove it by descending induction on $l$. For $l=
d-1$ we have nothing to prove (see also \cite[(2.7)(vii)]{S1}.
Assume that $l< d-1$. We have, by induction hypothesis, that
$\dim(A/\fp)\leq (l+1)-i-1$ for all $\fp\in\Supp_A(\mH_M^i)$ and for
all $i\geq -1$. If, for an ideal $\fp\in\Supp_A(\mH_M^i)$ and an
integer $i$, $\dim(A/\fp)=(l+1)-i-1$, then we get
$\fp\in\Ass_A(\mH_M^i)$ and so $\H_\fm^{l+1}(M)\not = 0$ by
Proposition 2.3, which contradicts the assumption. Therefore
$\dim(A/\fp)\not=(l+1)-i-1$ for any $\fp\in\Supp_A(\mH_M^i)$ and all
$i\geq -1$. That is $\dim_A(\mH_M^i)< (l+1)-i-1$ for all $i\geq -1$.
In other words, $\dim_A(\mH_M^i)\leq l-i-1$ for all $i\geq -1$.

(ii)$\Longrightarrow$(i). By descending induction on $l$. For $l=
d-1$ we have nothing to prove. Assume that $l<d-1$. As
$\dim_A(\mH_M^i)\leq l-i-1<(l+1)-i-1$ for all $i\geq -1$, we have,
by induction hypothesis, that $\H_\fm^j(M)=0$ for all $j$, $l+1<j<
d$. Moreover, Proposition 2.3 implies that $\Att_A(\H_\fm^{l+1}(M))$
is empty so that $\H_\fm^{l+1}(M)= 0$.
\end{proof}

The following result is now a clear conclusion of the above
corollary.

\begin{cor}
 Assume that $(A, \fm)$ is a local ring, $M$ is a finite
$A$--module which is not Cohen-Macaulay and that $\mathcal{C}_A(M)$
is finite. Set $s= 1+ \sup\{\emph\dim_A(\mH_M^i)+i: i\geq -1\}$.
Then $\emph\H_\fm^s(M)\not= 0$ and  $\emph\H_\fm^i(M)= 0$ \emph{for
all} $i, s< i< d$.
\end{cor}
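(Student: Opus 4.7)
The plan is to derive everything directly from Corollary 2.5, after first checking that the integer $s$ is well-defined and satisfies $0\leq s<d$.

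First I would verify finiteness and bounds on $s$. Because $\mathcal{C}_A(M)$ is finite, Sharp's bound (the same fact cited as \cite[(2.7)(vii)]{S1} in Lemma 2.2) gives $\dim_A(\mH_M^i)\leq d-i-2$ for every $i\geq -1$. In particular $\mH_M^i=0$ once $i\geq d-1$, so only finitely many terms contribute and the supremum is finite; moreover $\dim_A(\mH_M^i)+i\leq d-2$, whence $s\leq d-1<d$. On the other hand, since $M$ is not Cohen-Macaulay, by the Sharp--Schenzel criterion cited in the introduction ($M$ is CM iff $\mathcal{C}_A(M)$ is exact) we have $\mH_M^{i_0}\neq 0$ for some $i_0$, so $\dim_A(\mH_M^{i_0})+i_0\geq -1$ and hence $s\geq 0$.

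Next I would obtain the vanishing above degree $s$. By the very definition of $s$, $\dim_A(\mH_M^i)\leq s-i-1$ for all $i\geq -1$. Applying the direction (ii)$\Rightarrow$(i) of Corollary 2.5 with $l=s$ (which is legal since $s<d$) yields $\H_\fm^j(M)=0$ for every $j$ with $s<j<d$.

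Finally I would establish $\H_\fm^s(M)\neq 0$ by contradiction. If $\H_\fm^s(M)=0$, then combined with the previous step we obtain $\H_\fm^j(M)=0$ for all $j$ with $s-1<j<d$. Since $s-1<d$, applying the direction (i)$\Rightarrow$(ii) of Corollary 2.5 with $l=s-1$ gives $\dim_A(\mH_M^i)\leq s-i-2$ for every $i\geq -1$. Therefore $\sup\{\dim_A(\mH_M^i)+i:i\geq -1\}\leq s-2$, forcing $s\leq s-1$, a contradiction. (Notice that the corner case $s=0$ causes no trouble: the resulting inequality $\dim_A(\mH_M^i)\leq -i-2$ forces every $\mH_M^i=0$, so $\mathcal{C}_A(M)$ would be exact and $M$ Cohen-Macaulay, contradicting our hypothesis.)

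The argument is essentially a one-line deduction once the bounds $0\leq s<d$ are in place; the only point that requires minor care is the $s=0$ case of the contradiction step, but it is absorbed by the same application of Corollary 2.5 together with the Sharp--Schenzel exactness criterion, so no genuine obstacle arises.
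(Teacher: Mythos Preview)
Your argument is correct and is exactly the approach the paper intends: the paper simply says the result is ``a clear conclusion of the above corollary'' (the equivalence between the vanishing of $\H_\fm^j(M)$ for $l<j<d$ and the bound $\dim_A(\mH_M^i)\leq l-i-1$), and you have spelled out how both directions of that equivalence yield the two assertions. The only slip is a labeling error: the equivalence you invoke is Corollary~2.4 in the paper's numbering, not Corollary~2.5 (which is the very statement you are proving).
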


The following corollary gives us a non--vanishing criterion of
$\H_\fm^{d-1}(M)$ when $\mathcal{C}_A(M)$ is finite.
\begin{cor}
Assume that $(A, \fm)$ is a local ring and that $M$ is a finite
$A$--module such that $\mathcal{C}_A(M)$ is finite. Then
\begin{itemize}
\item[(i)] $\emph\Att_A(\emph\H^{d-1}_\fm(M))=\bigcup^{d-2}_{i=-1}\{\fp\in\emph\Ass_A(\mH_M^i):
\emph\dim(A/\fp)=d-i-2\}.$
\item[(ii)] $\emph\H_\fm^{d-1}(M)\not =0$ if and only if
$\emph\dim_A(\mH_M^i)= d-i-2$ for some $i$, $-1\leq i\leq d-2$.
\end{itemize}
\end{cor}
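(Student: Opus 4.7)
The plan is to derive both parts of the statement as essentially immediate corollaries of Proposition 2.3 and Corollary 2.4; no genuinely new input is required.

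For part (i), I would specialize Proposition 2.3 to $t = d-1$. The technical hypothesis demanded there, namely $\dim_A(\mH_M^i) \leq t - i - 1$ for all $i \geq -1$, reads as $\dim_A(\mH_M^i) \leq d - i - 2$, which is precisely the universal upper bound from \cite[(2.7)(vii)]{S1} that the parenthetical inside Proposition 2.3 flags as automatic when $t = d-1$. Substituting $t = d-1$ into the formula supplied by Proposition 2.3 then returns verbatim the claimed description
$$\Att_A(\H^{d-1}_\fm(M)) = \bigcup_{i=-1}^{d-2}\{\fp \in \Ass_A(\mH_M^i) : \dim(A/\fp) = d-i-2\}.$$

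For part (ii), the cleanest route is to invoke Corollary 2.4 with $l = d-2$: that corollary asserts the equivalence of the vanishing $\H_\fm^j(M) = 0$ for every $j$ with $d - 2 < j < d$, i.e.\ $\H_\fm^{d-1}(M) = 0$, with the estimate $\dim_A(\mH_M^i) \leq d - i - 3$ for all $i \geq -1$. Taking contrapositives and combining with the universal upper bound $\dim_A(\mH_M^i) \leq d - i - 2$ forces the equivalent statement $\dim_A(\mH_M^i) = d - i - 2$ for some $i$. The requirement that the relevant $\mH_M^i$ be nonzero (so that its dimension is nonnegative) restricts the index range to $-1 \leq i \leq d - 2$, as claimed. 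As a cross-check one could instead read (ii) off from (i) directly: finiteness of $\mathcal{C}_A(M)$ makes each $\mH_M^i$ finite, so the equality $\dim_A(\mH_M^i) = d - i - 2$ produces some $\fp \in \Ass_A(\mH_M^i)$ with $\dim(A/\fp) = d - i - 2$, which by (i) lies in $\Att_A(\H_\fm^{d-1}(M))$, and conversely any prime in $\Att_A(\H_\fm^{d-1}(M))$ furnishes such an equality through the description in (i).

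The main obstacle is essentially nonexistent here; the proof is a matter of correct index bookkeeping and of noticing that the universal dimension estimate on the cohomologies of the Cousin complex is exactly the hypothesis needed to apply the preceding results with $t = d-1$ (respectively $l = d-2$).
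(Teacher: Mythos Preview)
Your proposal is correct and matches the paper's approach: the paper's proof is the single line ``It is clear by Proposition 2.3,'' which is exactly your derivation of (i), and your cross-check deriving (ii) directly from (i) is the reading most consistent with that one-line citation. Your primary route to (ii) via Corollary 2.4 with $l=d-2$ is also fine (and Corollary 2.4 is itself an immediate consequence of Proposition 2.3), so there is no substantive difference.
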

\begin{proof}
It is clear by Proposition 2.3.
\end{proof}


\section{Cohen-Macaulay locus}
Throughout this section $A$ is a noetherian ring not necessarily
local and $M$ is a finite $A$--module. In the case $A$ is local, we
use $\widehat{M}$ as the completion of $M$ with respect to the
maximal ideal of $A$. The objective of this section is to study the
Cohen-Macaulay locus $\CM(M)=:\{\fp\in\Spec A: M_\fp \mbox{ is
Cohen-Macaulay } A_\fp\mbox{--module}\}$ of $M$. Our main goal is to
find out when it is a Zariski-open subset of $\Spec A$. We first
mention a remark for future references.

\begin{rem}\label{3.1}
For a finite $A$--module $M$ of finite dimension, if the Cousin
complex of $M$ is finite, then non-$\CM(M)=
\V(\underset{i}{\prod}(0:_A \mH_M^i))$
  so that $\CM(M)$
 is open, where $\mathcal{H}_M^i= H^i(\mathcal{C}_A(M))$.
\end{rem}
\begin{proof}
 It is clear, by \cite[Theorem 2.4]{S4} and \cite[Theorem 3.5]{S1},
 that $\CM(M)=\Spec(A)\setminus\underset{i\geq -1}{\cup}\Supp_A(\mH_M^i)$.

\end{proof}

The following lemma shows that the Cohen-Macaulay locus of a finite
module is open if it is true for certain submodules of $M$.

\begin{lem}\label{3.2}
Let $A$ be a noetherian ring, let $M$ be a finite $A$--module, and let\\
 {\small $S=\{T\subseteq\emph\Min_A(M)$: there exists $\fq\in\emph\Supp_A(M)$ such that $\emph\h(\fq/\fp)$ is constant
for all $\fp\in T\}$.} For each $T\in S$, we assign a submodule
$M^T$ of $M$ with $\emph\Ass_A(M^T)= T$ and $\emph\Ass_A(M/M^T)=
\emph\Ass_A(M)\setminus T$. Then
$$\emph\CM(M)= \underset{T\in
S}{\bigcup}(\emph\CM(M^T)\setminus\underset{\fp\in\emph\Ass_A(M)\setminus
T}{\cup}\V(\fp)).$$
\end{lem}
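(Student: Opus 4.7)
The plan is to establish the equality by proving both inclusions directly. The key input is that a Cohen-Macaulay module has no embedded associated primes and is equidimensional, combined with the standard formula $\Ass_{A_\fq}(N_\fq) = \{\fp A_\fq : \fp \in \Ass_A(N),\ \fp \subseteq \fq\}$ for a finite $A$-module $N$.

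For the inclusion $\supseteq$, I would fix $T \in S$ and take $\fq$ with $\fq \in \CM(M^T)$ and $\fq \notin \V(\fp)$ for every $\fp \in \Ass_A(M) \setminus T$. Using the defining property $\Ass_A(M/M^T) = \Ass_A(M) \setminus T$ together with the associated-prime formula, $\Ass_{A_\fq}((M/M^T)_\fq) = \emptyset$, and hence $(M/M^T)_\fq = 0$. The short exact sequence $0 \to M^T \to M \to M/M^T \to 0$ therefore yields $(M^T)_\fq \cong M_\fq$, and the Cohen-Macaulayness of $(M^T)_\fq$ transfers to $M_\fq$, giving $\fq \in \CM(M)$.

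For the inclusion $\subseteq$, I would take $\fq \in \CM(M)$ and set $T := \{\fp \in \Ass_A(M) : \fp \subseteq \fq\}$. Since $M_\fq$ is Cohen-Macaulay it has no embedded primes, so $\Ass_{A_\fq}(M_\fq)$ consists of minimal primes of $M_\fq$; translating back, every $\fp \in T$ is a minimal prime of $M$, i.e.\ $T \subseteq \Min_A(M)$. Cohen-Macaulayness also gives $\Ass_{A_\fq}(M_\fq) = \Assh_{A_\fq}(M_\fq)$, so every $\fp \in T$ satisfies $\h(\fq/\fp) = \dim_{A_\fq}(M_\fq)$, a constant; thus $T \in S$ with $\fq$ itself as witness. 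By construction no $\fp \in \Ass_A(M) \setminus T$ is contained in $\fq$, so the same vanishing argument yields $(M/M^T)_\fq = 0$, whence $(M^T)_\fq = M_\fq$ is Cohen-Macaulay and $\fq \in \CM(M^T) \setminus \bigcup_{\fp \in \Ass_A(M) \setminus T} \V(\fp)$.

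The only real subtlety is verifying $T \subseteq \Min_A(M)$ and the constancy of $\h(\fq/\fp)$ on $T$, but both facts rest on well-known properties of Cohen-Macaulay modules (no embedded primes, equidimensionality), so I do not anticipate a serious obstacle.
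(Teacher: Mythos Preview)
Your proof is correct and follows essentially the same approach as the paper's: both directions use the short exact sequence $0 \to M^T \to M \to M/M^T \to 0$ together with the vanishing $(M/M^T)_\fq = 0$, and for the inclusion $\subseteq$ your set $T = \{\fp \in \Ass_A(M) : \fp \subseteq \fq\}$ coincides (via the associated-prime formula) with the paper's $T' = \{\fP \cap A : \fP \in \Ass_{A_\fq}(M_\fq)\}$. You are slightly more explicit than the paper in verifying $T \subseteq \Min_A(M)$ via the absence of embedded primes, which the paper leaves implicit when asserting $T' \in S$.
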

\begin{proof} For each $T\in S$, it is clear that $$\Supp_A(M/M^T)=
 \underset{\fp\in\Ass_A(M)\setminus
T}{\cup}\V(\fp).$$ Let $\fq\in\CM(M)$ and set $T':= \{\fP\cap A:
\fP\in\Ass_{A_\fq}(M_\fq)\}$.
 As $M_\fq$ is Cohen-Macaulay, $\h(\fq/\fp)= \dim_A(M_\fq)$ for all $\fp\in T'$ and so
  $T'\in S$. We claim that $\fq\not\in\Supp_A(M/M^{T'})$. Assuming
  contrary, there is $\fp\in\Ass_A(M/M^{T'})$ such that $\fp\subseteq
  \fq$. Hence $\fp A_\fq\in\Ass_{A_\fq}(M_\fq))$ which implies that
   $\fp\in\ T'$. This contradicts with the fact that $\Ass_A(M/M^{T'})= \Ass_A(M)\setminus T'$.
 Therefore from the exact sequence
 \begin{equation}0\longrightarrow M^{T'}\longrightarrow
 M\longrightarrow M/M^{T'}\longrightarrow 0
 \end{equation}
  we get $(M^{T'})_\fq\cong M_\fq$
  so that $\fq\in\CM(M^{T'})$.

Conversely, assume that $T\in S$ and that
$\fq\in\CM(M^T)\setminus\underset{\fp\in\Ass_A (M)\setminus
T}{\cup}\V(\fp)$. That is $(M^T)_\fq$ is Cohen-Macaulay and
$\fq\not\in\Supp_A(M/M^T)$. Therefore $M_\fq$ is
   Cohen-Macaulay by (3.1), replacing $T$ by $T'$.
\end{proof}

Note that if $A/0:_A M$ is catenary, then each module $M^T$ in the
above lemma is an equidimensional $A$--module. Therefore one can
state the following remark.

\begin{rem} \emph{ If $A$ is catenary and $\CM(N)$ is open for all equidimensional
submodules $N$ of $M$, then $\CM(M)$ is open.}

\end{rem}

It is now a routine check to see that, over a local ring with
Cohen-Macaulay formal fibres, the Cohen-Macaulay locus of any finite
$A$--module is open.

\begin{rem}\label{3.7}
 Assume that $A$ is a
noetherian local ring such that all of its
 formal fibres are Cohen-Macaulay. Then the Cohen-Macaulay locus of any finite $A$--module $M$ is
a Zariski-open subset of  $\emph\Spec A$.
\end{rem}
\begin{proof}
Equivalently, we prove that  $\Min($non--$\CM(M))$ is a finite set.
Choose $\fp\in\Min($non--$\CM(M))$ and let $\fQ$ be a minimal member
of the non--empty set $\{\fq\in\Supp_{\widehat{A}}(\widehat{M}) :
\fq\cap A=\fp\}$. Since the formal fibre of $A$ over $\fp$ is
Cohen--Macaulay, $\widehat{M}_\fQ$ is not Cohen-Macaulay by the
standard dimension and depth formulas.  On the other hand, for each
$\fq\in\Supp_{\widehat{A}}(\widehat{M})$ with $\fq\subset \fQ$ we
have $\fq\cap A\subset \fp$ and so $\widehat{M}_{\fq}$ is
Cohen-Macaulay again by the standard dimension and depth formulas.
Hence $\fQ\in\Min($non--$\CM(\widehat{M}))$.

Therefore, it is enough to show that $\Min($non--$\CM(\widehat{M}))$
is a finite set, equivalently $\CM(\widehat{M})$ is open. As
$\widehat{A}$ is catenary, we may assume that $\widehat{M}$ is
equidimensional $\widehat{A}$--module by Remark 3.3. Finally,
\cite[Theorem 5.5]{K} implies that
$\mathcal{C}_{\widehat{A}}(\widehat{M})$ has finite cohomologies and
so $\CM(\widehat{M})$ is open by Remark 3.1.
\end{proof}

 One may ask that, in Remark \ref{3.7}, what condition can be
 replaced instead of the Cohen-Macaulay--ness of all formal fibres.
 We are
going to introduce another such class of rings.

In \cite[Theorem 3.2]{Z}, it is shown that a finite dimensional ring
$A$ has a uniform local cohomological annihilator if and only if $A$
is locally equidimensional and $A/\fp$ has a uniform local
cohomological annihilator for all $\fp\in \Min(A)$. The module version of \cite[Theorem 3.2]{Z} is also true. \\

\begin{prop} \label{ulc}
Let $A$ be a ring and $M$ be a finite $A$--module of finite
dimension $d$. Then the following conditions are equivalent.
\begin{enumerate}
\item[(i)] $M$ has a uniform local cohomological annihilator.
\item[(ii)] $M$ is locally equidimensional and $A/\fp$ has a
uniform local cohomological annihilator for all $\fp\in
\emph\Min_A(M)$.
\end{enumerate}
\end{prop}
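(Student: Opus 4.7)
The proposition is the module-theoretic counterpart of Zhou's \cite[Theorem~3.2]{Z}, and my plan is to mirror his argument with adjustments for the module structure. I treat the two implications separately.

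For (ii)~$\Rightarrow$~(i), take a prime filtration
\[
  0 = M_0 \subset M_1 \subset \cdots \subset M_n = M, \qquad M_i/M_{i-1}\cong A/\fP_i,
\]
in which every $\fP_i$ contains some $\fp\in\Min_A(M)$. Local equidimensionality of $M$ forces $\dim_A(A/\fP_i)=d$ exactly when $\fP_i\in\Min_A(M)$, and $\dim_A(A/\fP_i)<d$ otherwise; for a non-minimal $\fP_i$ the local cohomology $\H^j_\fm(A/\fP_i)$ vanishes for $j$ close to $d$, and in the remaining range it is killed by any element of $\fP_i\setminus\bigcup_{\fp\in\Min_A(M)}\fp$, which exists by prime avoidance. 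For each $\fp\in\Min_A(M)$ the hypothesis furnishes a \unif $y_\fp$ of $A/\fp$, and multiplying the $y_\fp$ with the elements just selected yields a single $x\in A\setminus\bigcup_{\fp\in\Min_A(M)}\fp$. A descent through the long exact sequences
\[
  \cdots\lo\H^{j-1}_\fm(A/\fP_i)\lo\H^j_\fm(M_{i-1})\lo\H^j_\fm(M_i)\lo\H^j_\fm(A/\fP_i)\lo\cdots
\]
attached to $0\to M_{i-1}\to M_i\to A/\fP_i\to 0$ then shows that a suitable power of $x$ annihilates every $\H^j_\fm(M)$ with $j<d$ at every maximal ideal $\fm$, so this power of $x$ is a \unif of $M$.

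For (i)~$\Rightarrow$~(ii), let $x$ be a \unif of $M$. To obtain local equidimensionality, fix $\fm\in\MaxSupp_A(M)$ and suppose that some minimal prime $\fq\subseteq\fm$ of $\Supp_A(M)$ has $\dim_{A_\fm}(A_\fm/\fq A_\fm)<\dim_{A_\fm}(M_\fm)=:e$; then $x/1\in A_\fm$ is a \unif of $M_\fm$, and analysing the top local cohomology $\H^e_{\fm A_\fm}(M_\fm)$ via the Lichtenbaum--Hartshorne theorem together with the attached primes formula $\Att_{A_\fm}\H^e_{\fm A_\fm}(M_\fm)=\Assh_{A_\fm}(M_\fm)$ forces $x\in\fq$, against the choice of $x$. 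To produce a \unif of $A/\fp$ for $\fp\in\Min_A(M)$, take the $\fp$-primary submodule $L\subseteq M$; then $L$ admits a finite filtration whose successive quotients are all copies of $A/\fp$. Two steps finish the job: the sequence $0\to L\to M\to M/L\to 0$ together with $\fp\notin\Ass_A(M/L)$ shows that a power of $x$ annihilates $\H^j_\fm(L)$ for every $j<\dim_A L$, and an induction along the $A/\fp$-filtration of $L$ transfers this to a \unif of $A/\fp$.

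The main obstacle throughout is the combined bookkeeping of dimensions and prime avoidance: each inductive step replaces $x$ by a higher power, yet the final element must still lie outside every minimal prime of $M$ and annihilate $\H^j_\fm(-)$ throughout the correct range. The finiteness of $\Min_A(M)$ and local equidimensionality of $M$ are decisive here, ensuring that powers $x^N$ remain outside $\bigcup_{\fp\in\Min_A(M)}\fp$ and that the successive quotients appearing in a prime filtration have dimensions bounded cleanly by $d$.
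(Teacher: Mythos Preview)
Your (ii)$\Rightarrow$(i) via a prime filtration is exactly what the paper intends; it says only that this direction is ``a straightforward adaptation of \cite[Theorem~3.2]{Z}''.

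The genuine gap is in (i)$\Rightarrow$(ii), in the equidimensionality step. You take $\fq\in\Min(M_\fm)$ with $c:=\dim(A_\fm/\fq A_\fm)<e:=\dim_{A_\fm}M_\fm$ and invoke the Lichtenbaum--Hartshorne theorem together with $\Att_{A_\fm}\H^e_{\fm A_\fm}(M_\fm)=\Assh_{A_\fm}(M_\fm)$. But both statements concern the \emph{top} cohomology $\H^e$: a \unif\ does not annihilate $\H^e$, and $\fq\notin\Assh_{A_\fm}(M_\fm)$ precisely because $c<e$, so neither fact places $x$ inside $\fq$. What is actually needed is a \emph{lower} degree: since $\fq$ is minimal in $\Supp(M)$ one has $\dim M_\fq=0$, hence $\fq A_\fq\in\Att_{A_\fq}\H^0_{\fq A_\fq}(M_\fq)$, and the weak shifted localization principle \cite[Exercise~11.3.8]{BS} (used elsewhere in the paper) then yields $\fq\in\Att_A\H^c_\fm(M)$. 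Now $c<d$ and $x\H^c_\fm(M)=0$ force $x\in\fq$, the contradiction you want. The paper does not write this out either; it simply quotes \cite[Proposition~2.11]{DJ}.

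Your construction of a \unif\ for $A/\fp$ is on firmer ground once ``$\fp\notin\Ass_A(M/L)$'' is read as ``$\fp\notin\Supp_A(M/L)$'' (which follows, since $\fp$ is minimal): then some $y\in A\setminus\fp$ annihilates $M/L$, and the long exact sequence shows that $xy$ is a \unif\ of $L$. The final descent from $L$ to $A/\fp$ along the filtration is less innocent than your one line suggests, however, because at $j=d-1$ the connecting maps land in $\H^d_\fm$ of a submodule, which the \unif\ does not kill; one needs a careful induction with growing powers rather than a single pass. The paper again defers this bookkeeping, pointing to the proof of \cite[Theorem~2.14]{DJ}.
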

\begin{proof}
(i) $\Rightarrow $ (ii). By \cite[proposition 2.11]{DJ}, $M$ is
locally equidimensional. Let $\fp\in\Min_A(M)$. Then $\frac{A}{0:_A
M}/\frac{\fp}{0:_A M}\cong A/\fp$ has a uniform local cohomological
annihilator as in the proof of  \cite[Theorem 2.14]{DJ}.

 (ii) $\Rightarrow$ (i). This is a straightforward adaptation of
\cite[Theorem 3.2]{Z}.
\end{proof}
In the following, we give a characterization for a module to have a
uniform local cohomological annihilator in terms of the sets of
attached primes of local cohomology modules.

\begin{lem}\label{3.6}
Assume that $(A, \fm)$ is local and that $M$ is a finite $A$--module
of dimension $d$. Then $M$ has a uniform local cohomological
annihilator if and only if
$\emph\Att_A(\emph\H^i_\fm(M))\cap\emph\Min_A(M)=\O$ for all
$i=0,\ldots,d -1$.
\end{lem}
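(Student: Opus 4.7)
The plan is to rewrite the existence of a uniform local cohomological annihilator as a non-containment of ideals, and then to convert that non-containment into a statement about attached primes by using that each $\H_\fm^i(M)$ is artinian.

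First I would set $I:=\bigcap_{i=0}^{d-1}\Ann_A(\H_\fm^i(M))$. By the very definition, $M$ has a uniform local cohomological annihilator precisely when $I\not\subseteq\bigcup_{\fp\in\Min_A(M)}\fp$. Since $\Min_A(M)\subseteq\Ass_A(M)$ is a finite set, ordinary prime avoidance converts this into the statement that $I\not\subseteq\fp$ for every $\fp\in\Min_A(M)$.

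Next I would invoke two standard facts. First, each $\H_\fm^i(M)$ is artinian, so that for every $i$ one has $\sqrt{\Ann_A(\H_\fm^i(M))}=\bigcap_{\fq\in\Att_A(\H_\fm^i(M))}\fq$. Taking the intersection over $0\le i<d$ yields $\sqrt{I}=\bigcap_{i=0}^{d-1}\bigcap_{\fq\in\Att_A(\H_\fm^i(M))}\fq$. Since $\fp$ is prime, $I\subseteq\fp$ is equivalent to $\sqrt{I}\subseteq\fp$; a second application of prime avoidance to the finite family of primes in this intersection then shows that $I\subseteq\fp$ if and only if some $\fq\in\Att_A(\H_\fm^i(M))$ with $0\le i<d$ satisfies $\fq\subseteq\fp$.

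The final step is to observe that $\Att_A(\H_\fm^i(M))\subseteq\Supp_A(M)$: indeed $\Ann_A(M)$ annihilates $\H_\fm^i(M)$, so every attached prime contains $\Ann_A(M)$ and hence lies in $\Supp_A(M)$. Because $\fp\in\Min_A(M)$ is minimal in $\Supp_A(M)$, any containment $\fq\subseteq\fp$ with $\fq\in\Att_A(\H_\fm^i(M))$ forces $\fq=\fp$. Chaining the equivalences, $M$ has a uniform local cohomological annihilator iff no $\fp\in\Min_A(M)$ belongs to $\Att_A(\H_\fm^i(M))$ for any $0\le i<d$, which is the asserted condition. The only slightly delicate point I expect is the management of quantifiers in the middle step, namely the justification that $I\subseteq\fp$ reduces to some attached prime being contained in $\fp$; everything else is a direct unwinding of definitions.
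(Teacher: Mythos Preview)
Your proposal is correct and follows essentially the same approach as the paper: both arguments hinge on prime avoidance, the identity $\sqrt{0:_A\H_\fm^i(M)}=\bigcap_{\fq\in\Att_A(\H_\fm^i(M))}\fq$ for artinian modules (this is \cite[Proposition 7.2.11]{BS}), and the observation that $\Att_A(\H_\fm^i(M))\subseteq\Supp_A(M)$. The only cosmetic difference is that you package everything as a single chain of equivalences, whereas the paper treats the two implications separately; one small terminological slip is that your ``second application of prime avoidance'' is really the statement that a prime containing a finite intersection of ideals contains one of them, but the logic is sound.
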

\begin{proof}
Assume that $M$ has a uniform local cohomological annihilator.
 Therefore there is an element $x\in A\setminus
\underset{\fp\in\Min_A(M)}{\bigcup}\fp$ satisfying $x\H^i_\fm(M)=0$
for all $i=0,\ldots,d-1$. Thus, by \cite [Proposition 7.2.11]{BS},
  $x\in\underset{\fq\in\Att_A(\H^i_\fm(M))}{\bigcap}\fq$ for all $0\leq i\leq
  d-1$. Now the claim is clear.

Conversely,  we first note that if $\fp\in\Att_A(\H_\fm^i(M))$ then
$0:_A
  M\subseteq 0:_A \H_\fm^i(M)\subseteq \fp$ which gives
  $\fp\in\Supp_A(M)$. Therefore, by \cite
[Proposition 7.2.11]{BS} and prime avoidance, our assumption implies
that $\cap_{\tiny{i= 0}}^{\tiny{d-1}}(0:_A
\H_\fm^i(M))\not\subseteq\underset{\tiny{\fp\in\Min_A(M)}}{\cup}
\fp$.
\end{proof}

The following lemma is straightforward and we give a proof for
completeness.

\begin{lem}\label{eq}
Assume that $A$ is a noetherian local ring and that $M$ is a finite
$A$--module.
 \begin{itemize}
 \item[(a)]If
  $\fQ\in\emph\Min_{\widehat{A}}(\widehat{M})$, then
  $\fQ\in\emph\Min_{\widehat{A}}(\widehat{A}/\fQ^{ce})$.
\item[(b)] If $A$ is universally catenary and $M$ is equidimensional, then
$\widehat{M}$ is equidimensional as $\widehat{A}$--module.
\end{itemize}
\end{lem}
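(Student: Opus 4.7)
For part (a), my approach is to leverage the identity $\Ann_{\widehat{A}}(\widehat{M}) = (\Ann_A M)\widehat{A}$, which is immediate from the flatness of completion together with finite generation of $M$. Set $\fp := \fQ \cap A$, so that $\Ann_A M \subseteq \fp$. Suppose $\fQ$ were not minimal over $\fp\widehat{A}$; then some $\fQ^* \subsetneq \fQ$ would contain $\fp\widehat{A}$, forcing $\fQ^* \supseteq (\Ann_A M)\widehat{A} = \Ann_{\widehat{A}}(\widehat{M})$. Hence $\fQ^* \in \Supp_{\widehat{A}}(\widehat{M})$, contradicting the minimality of $\fQ$ in $\Min_{\widehat{A}}(\widehat{M})$. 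This is essentially a one-line argument once the annihilator identity is in hand.

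For part (b), I plan to combine (a) with Ratliff's classical theorem: a noetherian local ring $A$ is universally catenary if and only if $\widehat{A}/\fp\widehat{A}$ is equidimensional for every $\fp \in \Spec A$. Fix $\fQ \in \Min_{\widehat{A}}(\widehat{M})$ and set $\fp = \fQ \cap A$. I would first verify that $\fp \in \Min_A(M)$ by using the going-down property of the faithfully flat extension $A \to \widehat{A}$: any prime $\fp' \subsetneq \fp$ in $\Supp_A(M)$ would lift to a prime $\fQ' \subsetneq \fQ$ of $\widehat{A}$ with $\fQ' \cap A = \fp'$, and $\fp' \supseteq \Ann_A M$ would again place $\fQ'$ in $\Supp_{\widehat{A}}(\widehat{M})$, contradicting minimality of $\fQ$. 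Part (a) then shows $\fQ \in \Min_{\widehat{A}}(\widehat{A}/\fp\widehat{A})$, and Ratliff's theorem (applied to the universally catenary ring $A$) yields $\dim \widehat{A}/\fQ = \dim \widehat{A}/\fp\widehat{A} = \dim A/\fp$, where the last equality uses $\widehat{A}/\fp\widehat{A} = \widehat{A/\fp}$ and invariance of dimension under completion. The equidimensionality of $M$ finishes the proof, since $\dim A/\fp = \dim M = \dim \widehat{M}$.

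The main subtlety I anticipate lies in the precise invocation of Ratliff's theorem; one needs the correct direction (universally catenary $\Rightarrow$ $\widehat{A}/\fp\widehat{A}$ equidimensional for every prime $\fp$) and to recognize that this applies directly without first passing to $A/\fp$ as an independent ring. Apart from that, the argument is a chain of standard facts about faithfully flat extensions: going-down, the behavior of annihilators under base change, and invariance of Krull dimension under completion. Since the authors themselves remark that the lemma is straightforward, I would expect the write-up to be short, the bulk of it being the ``$\fp \in \Min_A(M)$'' verification via going-down in (b).
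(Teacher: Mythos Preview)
Your proof is correct and follows essentially the same route as the paper's; part (b) is identical in structure (the paper invokes \cite[Theorem~31.7]{Ma} for Ratliff's theorem). In part (a) there is only a cosmetic difference: you argue via the identity $\Ann_{\widehat{A}}(\widehat{M}) = (\Ann_A M)\widehat{A}$, whereas the paper instead embeds $A/\fQ^c \hookrightarrow M$ (using $\fQ^c \in \Min_A(M)\subseteq \Ass_A(M)$, obtained by going-down), tensors to get $\widehat{A}/\fQ^{ce} \hookrightarrow \widehat{M}$, and concludes that any minimal prime of $\widehat{A}/\fQ^{ce}$ lies in $\Ass_{\widehat{A}}(\widehat{M})$.
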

\begin{proof}
(a). It follows that  $\fQ^c\in\Min_A(M)$ by the Going Down Theorem.
Assume that $\fQ'\in\Min_{\widehat{A}}(\widehat{A}/\fQ^{ce})$ such
that $\fQ'\subseteq\fQ$. The exact sequence $0\longrightarrow
A/\fQ^c\longrightarrow M$ implies the exact sequence
$0\longrightarrow \widehat{A}/\fQ^{ce}\longrightarrow \widehat{M}$.
Therefore $\fQ'\in\Ass_{\widehat{A}}(\widehat{M})$ and so
$\fQ'=\fQ$.

(b). Assume that $\fQ\in\Min_{\widehat{A}}(\widehat{M})$. By the
Going Down Theorem, $\fQ^c\in\Min_A(M)$ from which we have
$\dim_{\widehat{A}}(\widehat{A}/\fQ^{ce})= \dim_A(A/\fQ^c)=
\dim_A(M)$. As, by part (a),
$\fQ\in\Min_{\widehat{A}}(\widehat{A}/\fQ^{ce})$ and using the fact
that $A/\fQ^c$ is formally equidimensional, we get
$\dim(\widehat{A}/\fQ)=\dim_{\widehat{A}}({\widehat{A}/\fQ^{ce}})$
which implies that $\dim(\widehat{A}/\fQ)=\dim_A(M)$.
\end{proof}


To prove our main result, Theorem 3.13, we need to know the
properties of a generalized Cohen-Macaulay module in terms of
certain prime ideals $\fp$ which $A/\fp$ has uniform local
cohomological annihilators. In this connection, we will prove that
``a local equidimensional ring $A$ is generalized Cohen-Macaulay if
and only if $A/\fp$ has uniform local cohomological annihilators for
all $\fp\in\Spec A$ and non--$\CM(M)\subseteq\{\fm\}$" (see
Corollary 3.12) which is interesting on its own.

Recall the following definition.
\begin{defn} \emph{A finite module $M$ over a local ring $(A, \fm)$ with $d=\dim_A(M)$ is called
a} generalized Cohen-Macaulay \emph{(g.CM) module whenever
$\fm^n\H_\fm^i(M)= 0$ for some $n\in\mathbb{N}$ for all $i<d$}.
\emph{The module $M$ is called}  quasi-Buchsbaum \emph{whenever
$\fm\H_\fm^i(M)= 0$ for all $i<d$.}
\end{defn}
The following remark is easy but we bring it here for completeness
and future reference.

\begin{rem}\label{1.4}
Assume that $(A, \fm)$ is local.

\begin{enumerate}
\item[(a)] A finite $A$--module $M$ is g.CM if and only if all
cohomology modules of $\C_A(M)$ are of finite lengths.

\item[(b)]
     A finite $A$--module $M$ is quasi-Buchsbaum module if and only if
$\C_A(M)$ is finite and $\fm\mH^i_M=0$ for all $i$.
   \end{enumerate}
\end{rem}
\begin{proof}
(a).  Assume that $M$ is g.CM. By \cite[Exercise 9.5.7]{BS}, we have
$M_\fp$ is Cohen-Macaulay for all
$\fp\in\Supp_A(M)\setminus\{\fm\}$. So that
$\Supp_A(\mH^i_M)\subseteq \{\fm\}$ (\cite[Example 4.4]{SSc}) and,
by Lemma 2.1, the result follows. The converse is clear by Lemma
2.1.

(b). It is similar to (a).
\end{proof}

\begin{rem}\label{1.5}
Let $(A,\fm)$ be a g.CM local ring. Then $A/\fp$ has a uniform local
cohomological annihilator for all $\fp\in\emph\Spec(A)$. In
particular, any equidimensional $A$--module $M$ has a uniform local
cohomological annihilator.
\end{rem}
\begin{proof}
Let $\fp\in\Spec(A)$. Assume that $\h(\fp)=0$. As $A$ is g.CM, $A$
has a uniform local cohomological annihilator and thus $A/\fp$ has a
uniform local cohomological annihilator by  \cite[Theorem 3.2]{Z}.
Assume that $\h_M(\fp)=t>0$. There is a subset of system of
parameters $x_1,\ldots,x_t$ of $A$ contained in $\fp$. By
\cite[Exercise 9.5.8]{BS}, $A/(x_1,\ldots,x_t)$ is g.CM and so it
has a uniform local cohomological annihilator. In particular $A/\fp$
has a uniform local cohomological annihilator by \cite[Theorem
3.2]{Z}. The final part (immediately) follows from Proposition 3.5.
\end{proof}

The following result provides an answer to a partial converse of the
above statement.

\begin{lem}\label{3.8}
Assume that $(A, \fm)$ is a local ring such that $A/\fp$ has a
uniform local cohomological annihilator for all $\fp\in\emph\Spec
A$. For a finite $A$--module $M$ the following statements are
equivalent.

 \begin{itemize}
 \item[(i)] $M$ is  equidimensional $A$--module and
 non--$\CM(M)\subseteq\{\fm\}$.
 \item[(ii)] $M$ is a g.CM module.
 \end{itemize}
\end{lem}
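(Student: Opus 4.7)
\emph{Plan.} For (ii) $\Rightarrow$ (i), the standard characterisation of generalized Cohen-Macaulay modules \cite[Exercise 9.5.7]{BS} gives, for any g.CM module $M$ of dimension $d$, the equality $\dim_{A_\fp} M_\fp + \dim A/\fp = d$ for every $\fp \in \Supp_A M \setminus \{\fm\}$. In particular $M$ is equidimensional and non--$\CM(M) \subseteq \{\fm\}$.

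For (i) $\Rightarrow$ (ii) I argue by induction on $d = \dim_A M$; the cases $d \le 1$ are immediate, since $\H_\fm^0(M)$ is a finitely generated submodule of $M$, hence of finite length. Assume $d \ge 2$. Replacing $M$ by $M/\G_\fm(M)$ preserves hypothesis (i) and leaves $\H_\fm^i(M)$ unchanged for $i \ge 1$, so I may assume $\depth_A M \ge 1$. Since $M_\fp$ is Cohen-Macaulay for every $\fp \ne \fm$, there can be no embedded primes off $\fm$, so $\Ass_A M = \Min_A M$ and $M$ is locally equidimensional. Proposition \ref{ulc}, together with the standing hypothesis that $A/\fp$ has a uniform local cohomological annihilator for every $\fp \in \Min_A M$, then supplies $x \in A \setminus \bigcup_{\fp \in \Min_A M} \fp$ with $x\H_\fm^i(M) = 0$ for all $i < d$; by prime avoidance I may insist $x \in \fm$, and then $x$ is $M$-regular.

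Feeding $0 \to M \xrightarrow{x} M \to M/xM \to 0$ through local cohomology and using $x\H_\fm^i(M) = 0$ for $i < d$ collapses the long exact sequence to
\[
0 \to \H_\fm^i(M) \to \H_\fm^i(M/xM) \to \H_\fm^{i+1}(M) \to 0, \qquad 0 \le i \le d-2.
\]
If $M/xM$, which has dimension $d-1$, satisfies hypothesis (i), the inductive hypothesis makes it g.CM, so each $\H_\fm^j(M/xM)$ with $j < d-1$ is of finite length, and every $\H_\fm^i(M)$ with $i < d$ inherits finite length as a submodule or quotient of one such $\H_\fm^j(M/xM)$. The punctured-spectrum Cohen-Macaulayness of $M/xM$ is immediate: for $\fp \ne \fm$ in $\Supp_A(M/xM)$, the element $x$ is $M_\fp$-regular and $M_\fp$ is Cohen-Macaulay, so $(M/xM)_\fp$ is Cohen-Macaulay.

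The real obstacle is the equidimensionality of $M/xM$. Any $\fp \in \Min_A(M/xM)$ is minimal over $\fq + (x)$ for some $\fq \in \Min_A M$, and Krull's Hauptidealsatz gives $\h(\fp/\fq) = 1$; obtaining $\dim A/\fp = d - 1$ therefore requires $A/\fq$ to be catenary. This catenarity must be extracted from the standing hypothesis---every quotient of $A/\fq$ still possesses a uniform local cohomological annihilator---via Zhou's structural results \cite{Z} (in the same spirit as the paper's forthcoming Theorem 4.2), which force $\widehat{A/\fq}$ to be equidimensional and hence, by Ratliff's theorem, $A/\fq$ to be universally catenary. Pinning this catenarity down cleanly from the ulca hypothesis is where I expect the genuine technical weight of the proof to sit.
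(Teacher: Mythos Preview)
Your argument is correct and follows the same skeleton as the paper's: deduce that $A$ is universally catenary from the standing hypothesis, reduce to $\depth_A M\ge 1$ so that $\Ass_A M=\Min_A M$, invoke Proposition~\ref{ulc} to obtain a uniform local cohomological annihilator $x$ that is $M$-regular, and induct on $d=\dim_A M$ via the collapsed long exact sequence for $M/xM$. The catenarity step is exactly what the paper does (though the paper places it first, before the induction): $A/\fp$ has a uniform local cohomological annihilator for every $\fp$, hence so does $\widehat{A}/\fp\widehat{A}$, hence the latter is equidimensional, hence $A$ is universally catenary by \cite[Theorem~31.7]{Ma}. One small expositional slip: your claim that ``$M$ is locally equidimensional'' does not follow from $\Ass_A M=\Min_A M$ alone; it needs the catenarity you only extract later, so that deduction should move up.

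The one genuine difference is the base of the induction. You take $d\le 1$, which is immediate. The paper instead starts at $d=2$ and handles that case separately: it passes to $\widehat{M}$ (equidimensional by Lemma~\ref{eq}, so $\mathcal{C}_{\widehat{A}}(\widehat{M})$ is finite by \cite[Theorem~5.5]{K}), applies Corollary~2.6 to describe $\Att_{\widehat{A}}(\H^1_{\widehat{\fm}}(\widehat{M}))$, and then uses Lemma~\ref{3.6} to rule out minimal primes, forcing $\H^1_\fm(M)$ to have finite length. Your trivial base case shows that this detour through the attached-prime machinery of Section~2---whose declared main purpose in the paper is precisely this lemma---is not actually needed for the proof: once catenarity is in hand, the induction step works uniformly for every $d\ge 2$, reducing $d=2$ to the trivial one-dimensional case.
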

\begin{proof} Note that $\widehat{A}/\fp\widehat{A}$ has a uniform
local cohomological annihilator and so $A/\fp$ is formally
equidimensional. Thus, by \cite[Theorem 31.7]{Ma}, $A$ is
universally catenary.

 (i)$\Rightarrow$ (ii). Since
$\H^i_\fm(M)\cong\H^i_\fm(M/\Gamma_\fm(M))$ for $i>0$, we may assume
that $\Gamma_\fm(M)=0$ and so $\fm\notin\Ass_A(M)$. As, for each
$\fp\in \Ass_A(M)$, $M_\fp$ is Cohen-Macaulay, so
$\Ass_A(M)=\Min_A(M)$.
 Thus $\mH^{-1}_M=0$ (see \cite[Example 4.4]{SSc}). Since
$M$ is equidimensional and $A/\fp$ has a uniform local cohomological
annihilator for all $\fp\in\Min_A(M)$, $M$ has a uniform local
cohomological annihilator by Proposition \ref{ulc} and so
$A/(0:_AM)$ is universally catenary by \cite[Theorem 2.14]{DJ}. As a
result, Lemma \ref{eq} implies that $\widehat{M}$ is
equidimensional. Hence $\C_{\widehat{A}}(\widehat{M})$ is finite by
\cite[Theorem 5.5]{K}.

Now, we prove the statement by using induction on $d=\dim_A(M)$. For
$d=2$, we have, by Corollary 2.6, that
$$\Att_{\widehat{A}}(\H^1_{\widehat{\fm}}(\widehat{M}))=\{\fp\in\Ass_{\widehat{A}}(\mH^{-1}_{\widehat{M}})
: \h_{\widehat{M}}(\fp)=1\}
\cup\{\fp\in\Ass_{\widehat{A}}(\mH^{0}_{\widehat{M}}) :
\h_{\widehat{M}}(\fp)=2\}.$$ If
$\fp\in\Ass_{\widehat{A}}(\mH^{-1}_{\widehat{M}})$ with
$\h_{\widehat{M}}(\fp)=1$,
 then $\fp\in\Ass_{\widehat{A}}(\widehat{M})$ and so  $\fp^c\in\Ass_A(M)=\Min_A(M)$.
 On the other hand, since
 $\fp\in\Att_{\widehat{A}}(\H_{\widehat{\fm}}^1(\widehat{M}))$,
 $\fp^c\in\Att_A(\H^1_\fm(M))$ by \cite[Lemma 2.1]{S5} which contradicts with Lemma \ref{3.6}.
 Hence $\Att_{\widehat{A}}(\H^1_{\widehat{\fm}}(\widehat{M}))\subseteq \{\widehat{\fm}\}$ and there exists an integer
 $n$ such that $\fm^n\H^1_{\widehat{\fm}}(\widehat{M})=0$ by \cite[7.2.11]{BS} and so
 $\H^1_{\fm}(M)\otimes_A\widehat{A}$ is finite $\widehat{A}$--module.
 It implies the first step of the induction.

 Now assume that $d>2$ and the statement holds up to $d-1$. Let $x$ be a uniform local cohomological
 annihilator of $M$.
 Since $\Min_A(M)=\Ass_A(M)$, $x$ is a nonzero divisor on $M$ by using its definition. On the other
 hand, as $A$ is catenary,  it is straightforward to
  see that $M/xM$
 satisfies the induction hypothesis for $d-1$. Therefore, $\H_\fm^i(M/xM)$ is finite for all $i< d-1$.
  The exact sequence
 $0\longrightarrow M\overset{x}{\longrightarrow} M\longrightarrow M/xM \longrightarrow 0$ implies the
 long exact sequence
 $$\cdots\longrightarrow\H^i_\fm(M) \overset{x.}{\longrightarrow} \H^i_\fm(M)\longrightarrow
  \H^i_\fm(M/xM)\longrightarrow \H^{i+1}_\fm(M)\overset{x.}{\longrightarrow} \H^{i+1}_\fm(M)
  \longrightarrow\cdots.$$
 Since $x\H^j_\fm(M)=0$ for $j<d$, we get the exact sequence
 $0\longrightarrow \H^i_\fm(M)\longrightarrow \H^i_\fm(M/xM)\longrightarrow \H^{i+1}_\fm(M)
 \longrightarrow0$, for $i=0,\ldots,d-2$. Now the result follows.

(ii)$\Rightarrow$(i) is clear.
 \end{proof}

Now we can state a criterion for an equidimensional local ring to be
a g.CM ring in terms of uniform local cohomological annihilators.

\begin{cor}\label{3.9}
Assume that $A$ is an equidimensional noetherian local ring. The
following statements are equivalent.
\begin{itemize}
\item [(i)]$A$ is  g.CM.
\item [(ii)] $A/\fp$ has a uniform local cohomological annihilator for all $\fp\in\emph\Spec A$ and
non--$\emph\CM(A)\subseteq\{\fm\}$.
\end{itemize}
\end{cor}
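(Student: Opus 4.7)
The plan is to observe that this corollary is the specialization of the preceding material (Remark 3.10 and Lemma 3.11) to the module $M=A$, so essentially no new work is required beyond checking that the hypotheses of those results hold in the present setting.

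For the implication (i) $\Rightarrow$ (ii), suppose $A$ is generalized Cohen-Macaulay. Then Remark 3.10 applies directly and gives a uniform local cohomological annihilator for $A/\fp$ for every $\fp \in \Spec A$. The containment non-$\CM(A) \subseteq \{\fm\}$ is immediate from the standard fact (\cite[Exercise 9.5.7]{BS}, recalled in the introduction) that for a g.CM module $M$ over $(A,\fm)$ one has $\CM(M) \supseteq \Spec A \setminus \{\fm\}$; apply this with $M=A$.

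For the implication (ii) $\Rightarrow$ (i), I take $M = A$ in Lemma 3.11. The module $A$ is equidimensional by hypothesis, the first bullet of (ii) gives exactly the standing assumption of that lemma (that $A/\fp$ has a uniform local cohomological annihilator for every $\fp \in \Spec A$), and non-$\CM(A) \subseteq \{\fm\}$ is the second bullet of Lemma 3.11 (i) specialized to $M=A$. Lemma 3.11 therefore yields that $A$ is a g.CM module over itself, i.e. $A$ is a g.CM ring.

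There is essentially no obstacle; the only thing to be careful about is ensuring that the hypotheses of Lemma 3.11 are indeed what hypothesis (ii) provides when $M=A$. This is a matter of reading off the definitions, since Lemma 3.11 was formulated with precisely this application in mind.
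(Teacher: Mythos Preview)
Your proposal is correct and matches the paper's proof essentially verbatim: (i)$\Rightarrow$(ii) is obtained from Remark~3.10 together with \cite[Exercise 9.5.7]{BS}, and (ii)$\Rightarrow$(i) is the case $M=A$ of Lemma~3.11.
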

\begin{proof}
(i)$\Rightarrow$(ii). We know that $A_\fp$ is Cohen-Macaulay for all
$\fp\in\Spec A\setminus\{\fm\}$, by \cite[Exercise 9.5.7]{BS}. The
rest is the subject of Remark \ref{1.5}.

(ii)$\Rightarrow$(i) is immediate from Lemma \ref{3.8}.
\end{proof}
We are now able to prove that any minimal element of non-$\CM(M)$ is
either an attached prime of $\H_\fm^i(M)$ for some $i$ or $A_\fp$ is
not a Cohen-Macaulay ring.
\begin{thm}\label{3.10}
Assume that $(A,\fm)$ is a catenary local ring and that $M$ is a
finite equidimensional $A$--module. Then

\centerline{$\emph\Min($non--$\emph\CM(M))\subseteq \underset{0\leq
i\leq\emph\dim_A(M)}{\cup}\emph\Att_A(\emph\H^i_\fm(M))\cup$non--$\emph\CM(A)$.}
\end{thm}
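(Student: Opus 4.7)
Let $\fp\in\Min(\text{non-}\CM(M))$ and suppose $\fp\notin\text{non-}\CM(A)$, so that $A_\fp$ is Cohen--Macaulay; I must produce an index $0\leq i\leq d:=\dim_A(M)$ with $\fp\in\Att_A(\H^i_\fm(M))$. The plan is to localize at $\fp$, verify the hypotheses of Lemma \ref{3.8} for $M_\fp$ over $A_\fp$, conclude that $M_\fp$ is generalized Cohen--Macaulay but not Cohen--Macaulay, and then lift an attached prime back to $A$ via the Weak General Shifted Localization Principle, exactly as in the proof of Proposition 2.3.

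The first step is to check that $M_\fp$ is equidimensional over $A_\fp$ with $\text{non-}\CM(M_\fp)\subseteq\{\fp A_\fp\}$. The latter is immediate from the minimality of $\fp$ in $\text{non-}\CM(M)$. For equidimensionality, let $\fq\in\Min_A(M)$ with $\fq\subseteq\fp$; since $A$ is catenary, every saturated chain from $\fq$ to $\fm$ has the common length $\dim(A/\fq)$, and passing through $\fp$ gives $\h(\fp/\fq)+\dim(A/\fp)=\dim(A/\fq)=d$ (the last equality uses that $M$ is equidimensional). Hence $\h(\fp/\fq)=d-\dim(A/\fp)$ is independent of the choice of $\fq$, which is exactly equidimensionality of $M_\fp$, of dimension $d':=d-\dim(A/\fp)$.

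Next, since $A_\fp$ is Cohen--Macaulay it is in particular generalized Cohen--Macaulay, so by Remark \ref{1.5} the quotient $A_\fp/\fq$ has a uniform local cohomological annihilator for every $\fq\in\Spec(A_\fp)$. The hypothesis of Lemma \ref{3.8} is thus satisfied for the ring $A_\fp$ and the module $M_\fp$, and that lemma yields that $M_\fp$ is generalized Cohen--Macaulay. Because $M_\fp$ is not Cohen--Macaulay, there exists $0\leq j<d'$ such that $\H^j_{\fp A_\fp}(M_\fp)\neq 0$; being a nonzero module of finite length over the local ring $A_\fp$, it has $\fp A_\fp$ as its unique attached prime, so $\fp A_\fp\in\Att_{A_\fp}(\H^j_{\fp A_\fp}(M_\fp))$. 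The Weak General Shifted Localization Principle \cite[Exercise 11.3.8]{BS} then gives $\fp\in\Att_A(\H^{j+\dim(A/\fp)}_\fm(M))$, and since $j+\dim(A/\fp)<d'+\dim(A/\fp)=d$ this index lies in the required range. The main obstacle is the equidimensionality reduction in the second step, where the catenary hypothesis on $A$ is essential to transport the equidimensionality of $M$ from $A$ down to $A_\fp$ via the chain-length identity above.
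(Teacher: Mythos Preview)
Your proof is correct and follows essentially the same approach as the paper's: localize at $\fp$, use that $A_\fp$ is Cohen--Macaulay to verify the hypothesis of Lemma~\ref{3.8}, deduce $M_\fp$ is generalized Cohen--Macaulay but not Cohen--Macaulay, and lift the resulting attached prime via the Weak General Shifted Localization Principle. The only differences are cosmetic: you spell out the equidimensionality of $M_\fp$ explicitly (the paper just asserts it), and you route through Remark~\ref{1.5} to get uniform local cohomological annihilators for the $A_\fp/\fq A_\fp$, whereas the paper cites \cite[Corollary~3.3]{Z} directly.
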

\begin{proof}

Choose $\fp\in\Min($non--$\CM(M))$. As $A$ is catenary and $M$ is
equidimensional, $M_\fp$ is also equidimensional as $A_\fp$--module.
Assume that $A_\fp$ is a Cohen-Macaulay ring.  For each
$\fq\in\Spec(A)$ with $\fq\subseteq\fp$, $A_\fp/\fq A_\fp$ has a
uniform local cohomological annihilator (c.f. \cite[Corollary
3.3]{Z}). Therefore, by Lemma \ref{3.8}, $M_\fp$ is a g.CM
$A_\fp$--module. As $M_\fp$ is not Cohen-Macaulay, $\H^i_{\fp
A_\fp}(M_\fp)\neq 0$ for some integer $i$, $i< \dim_{A_\fp}(M_\fp)$.
In particular, $\H^i_{\fp A_\fp}(M_\fp)$ is a non--zero finite
length $A_\fp$--module so that $\Att_{A_\fp}(\H^i_{\fp
A_\fp}(M_\fp))=\{\fp A_\fp\}$. By \cite[Exercise 11.3.8]{BS},
$\fp\in\Att_A(\H^{i+t}_\fm(M))$, where $t=\dim(A/\fp)$. Now the
result follows.
\end{proof}


\begin{cor}
Assume that $(A, \fm)$ is a catenary local ring and that the
non--$\emph\CM(A)$ is finite (e.g. $A$ may satisfy Serre condition
$(\emph{S}_{d-2})$, $d:=\emph\dim A$, and $\mathcal{C}_A(A)$ is
finite). Then the Cohen--Macaulay locus of $M$ is open for any
finite $A$--module $M$.
\end{cor}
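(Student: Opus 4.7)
The plan is to show that $\Min(\mbox{non--}\CM(M))$ is finite; since non--$\CM(M)$ is closed under specialization in $\Spec A$, this is equivalent to $\CM(M)$ being open. Because $A$ is catenary, the remark immediately following Lemma \ref{3.2} reduces the problem to showing openness of $\CM(N)$ for every equidimensional submodule $N$ produced by that lemma. We may therefore assume from the start that $M$ itself is equidimensional.

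With $M$ equidimensional and $A$ catenary, Theorem \ref{3.10} supplies the inclusion
\[
\Min(\mbox{non--}\CM(M)) \ \subseteq\ \bigcup_{0\le i\le \dim_A M}\Att_A(\H^i_\fm(M))\ \cup\ \mbox{non--}\CM(A).
\]
Each $\H^i_\fm(M)$ is an artinian $A$-module, so each $\Att_A(\H^i_\fm(M))$ is finite; combined with the hypothesis that non--$\CM(A)$ is finite, this forces $\Min(\mbox{non--}\CM(M))$ to be finite, and the assertion follows.

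To justify the parenthetical example, assume $A$ satisfies $(S_{d-2})$ with $d=\dim A$ and $\mathcal{C}_A(A)$ is finite. Remark \ref{3.1} gives non--$\CM(A)=\V(I)$ with $I=\prod_i (0:_A\mH^i_A)$, so the locus is closed in $\Spec A$. The condition $(S_{d-2})$ forces every $\fp\in\mbox{non--}\CM(A)$ to satisfy $\h(\fp)\ge d-1$ (otherwise $A_\fp$ would be Cohen--Macaulay), hence $\dim(A/\fp)\le 1$; consequently $\dim(A/I)\le 1$, and a noetherian local ring of dimension at most $1$ has finite spectrum, yielding the finiteness of non--$\CM(A)$. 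The genuine content lies in Theorem \ref{3.10}; once that is in place, the proof assembles the reduction to the equidimensional case with the artinianness of local cohomology, and no further obstacle arises.
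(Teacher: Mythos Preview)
Your argument is correct and follows essentially the same route as the paper: reduce to the equidimensional case via Lemma~\ref{3.2} (using that $A$ is catenary), then invoke Theorem~\ref{3.10} to trap $\Min(\mbox{non--}\CM(M))$ inside a finite set. You spell out explicitly why the right-hand side is finite (artinianness of local cohomology gives finite attached primes) and you supply a justification of the parenthetical $(S_{d-2})$ example, which the paper leaves to the reader; both additions are sound.
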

\begin{proof} By Lemma \ref{3.2}, we may assume that $M$ is equidimensional.
Now \ref{3.10} implies that $\Min($non--$\CM(M))$ is a finite set.
In other words the non--$\CM(M)$ is Zariski closed subset of
$\Spec(A)$.
\end{proof}

In the following examples, we show that Remark 3.4 and Corollary
3.14 are significant. Example 3.15 gives a local ring $S$ with
Cohen-Macaulay formal fibres and the set non-$\CM(S)$ is infinite.
Example 3.16 presents a local ring $T$ which admits a
non--Cohen-Macaulay formal fibre with finite non-$\CM(T)$.

\begin{exam} \emph{Set $S= k[[X, Y, Z, U, V]]/(X)\cap(Y, Z)$, where $k$ is a
field. It is clear that $S$ is a local ring with Cohen-Macaulay
formal fibres. By \cite[Theorem 31.2]{Ma}, there are infinitely many
prime ideals $P$ of $k[[X, Y, Z, U, V]]$, $(X, Y, Z)\subset P\subset
(X, Y, Z, U, V)$. For any such prime ideal $P$, $S_{\overline{P}}$
is not equidimensional and so it is not Cohen-Macaulay. In other
words, non--$\CM(S)$ is infinite.}

\end{exam}
\begin{exam}
\emph{It is shown in \cite[Proposition 3.3]{FR} that there exists a
local integral domain $(R, \fm)$ of dimension 2 such that
$\widehat{R}= \mathbb{C}[[X, Y, Z]]/(Z^2, tZ)$, where $\mathbb{C}$
is the field of complex numbers and $t= X+ Y+ Y^2s$ for some
$s\in\mathbb{C}[[Y]]\setminus\mathbb{C}\{Y\}$. As $\Ass \widehat{R}=
\{(Z), (Z, t)\}$, $\widehat{R}$ does not satisfy $(S_1)$. Thus
$H^{-1}(\mathcal{C}(\widehat{R})) \neq 0$ while
$H^{-1}(\mathcal{C}(R))= 0$ (c.f.\cite[Example 4.4]{SSc}). By
\cite[Theorem 3.5]{P}, there exists a formal fibre of $R$ which is
not Cohen-Macaulay. As $R$ is an integral local domain, we have
non-$\CM(R)= \{\fm\}$.}
\end{exam}
\section{Rings whose formal fibres are Cohen-Macaulay}

Throughout this section $(A, \fm)$ is a local ring and $M$ is a
finite $A$--module of dimension $d$. It is shown in section 3 that
over a ring $A$ with Cohen-Macaulay formal fibres the Cohen-Macaulay
locus of any finite $A$--module $M$ is a Zariski-open subset of
$\Spec A$ (Remark 3.4). This result motivates us to determine rings
whose formal fibres are Cohen-Macaulay. More precisely, we study the
affect of certain formal fibres being Cohen-Macaulay on the
structure of a module.

In the following we first write a statement which summarizes the
results \cite[Corollary 2.3, Corollary 2.4, Lemma 2.5, Proposition
2.6, and Theorem 2.7]{DJ}. We denote $$\mathrm{a}(M):= \cap_{i=
0}^{d-1}(0 :_A \emph\H_\fm^i(M)).$$
\begin{thm}\label{A}
Assume that $M$ is a finite $A$--module with dimension $d$. Then
$$\prod_{i= -1}^{d-1}(0 :_A \mathcal{H}_M^i)\subseteq\mathrm{a}(M).$$
Moreover, if $\mathcal{C}_A(M)$ is finite, then
$$\prod_{i= -1}^{d-1}(0 :_A \mathcal{H}_M^i)\not\subseteq\underset{\fp\in\emph\Min_A(M)} {\cup}\fp,$$ where
$\mathcal{H}_M^i$ denotes the $i$th cohomology module of the Cousin
complex  $\mathcal{C}_A(M)$.
\end{thm}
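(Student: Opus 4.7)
The plan is to prove the two assertions separately, using the Cousin complex apparatus from Section 2. For the inclusion $\prod_{i=-1}^{d-1}(0:_A \mH_M^i)\subseteq \mathrm{a}(M)$, the idea is to slide cohomology classes in $\H^t_\fm(M)$ through the Cousin complex, multiplying by one annihilator factor $r_i:=0:_A \mH_M^i$ per level, until landing in a cohomology group that is forced to vanish by Sharp's theorem.

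First I would record the two elementary observations that drive the iteration. Sharp's theorem \cite[Theorem]{S2} (already invoked in Lemma \ref{1.3}) gives $\H^j_\fm(M^l)=0$ for all $j$ whenever $l<d$, because each direct summand of $M^l$ is an $A_\fp$-module with $\fp\neq\fm$ (since $\dim_{A_\fp}M_\fp=l<d$). Combined with the long exact sequence arising from (1.2), this produces the isomorphism $\H^j_\fm(M^{s-1}/K^{s-1})\cong \H^{j-1}_\fm(M^s/D^s)$ for every $s<d$ and every $j$. On the other hand, the long exact sequence attached to (1.3) shows that the natural map $\H^j_\fm(M^{s-1}/D^{s-1})\to \H^j_\fm(M^{s-1}/K^{s-1})$ has both kernel and cokernel annihilated by $r_{s-1}$. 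A quick diagram chase then yields the propagation step I need: if $b\cdot \H^j_\fm(M^{s-1}/K^{s-1})=0$, then $r_{s-1}b\cdot \H^j_\fm(M^{s-1}/D^{s-1})=0$.

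Fix $t$ with $0\leq t\leq d-1$. Applying (1.2) with $s=t$ gives an injection $\H^0_\fm(M^{t-1}/K^{t-1})\hookrightarrow \H^0_\fm(M^t)=0$, so the left-hand side vanishes. Starting from this vanishing, I alternate the map-step (which multiplies the annihilator by $r_{s-1}$) and the isomorphism-step (valid since $s<d$), traversing
\[\H^0_\fm(M^{t-1}/K^{t-1})\longrightarrow \H^0_\fm(M^{t-1}/D^{t-1})\cong \H^1_\fm(M^{t-2}/K^{t-2})\longrightarrow\cdots\longrightarrow\H^t_\fm(M^{-1}/D^{-1})=\H^t_\fm(M).\]
Tracking annihilators along this chain gives $r_{-1}r_0\cdots r_{t-1}\cdot \H^t_\fm(M)=0$. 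Since this product is contained in $\prod_{i=-1}^{d-1}r_i$, the latter annihilates $\H^t_\fm(M)$ for every $0\leq t\leq d-1$, which is the first inclusion.

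For the second assertion it suffices, by prime avoidance applied to the finite set $\Min_A(M)$, to show that $r_i\not\subseteq\fp$ for each $\fp\in\Min_A(M)$ and each $i$. Since $\C_A(M)$ is finite, $\mH_M^i$ is finitely generated, so $r_i\subseteq\fp$ would force $\fp\in\Supp_A(\mH_M^i)$. But $M_\fp$ has dimension zero over $A_\fp$, hence is Cohen--Macaulay, and Remark \ref{3.1} identifies $\CM(M)$ with $\Spec A\setminus\bigcup_i\Supp_A(\mH_M^i)$; this rules out $\fp\in\Supp_A(\mH_M^i)$ and gives the desired contradiction. The main technical hurdle is the book-keeping in the alternating chain so that the annihilator factors accumulate in the expected order; once that is done, everything else is formal.
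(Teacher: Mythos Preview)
Your argument is correct in substance, but note that the paper does not actually supply a proof of this theorem: it is stated as a summary of \cite[Corollary~2.3, Corollary~2.4, Lemma~2.5, Proposition~2.6, and Theorem~2.7]{DJ}, so there is no in-paper argument to compare against directly. Your direct walk through the Cousin complex---alternating the isomorphism $\H^j_\fm(M^{s-1}/K^{s-1})\cong\H^{j-1}_\fm(M^s/D^s)$ coming from Sharp's vanishing $\H^j_\fm(M^s)=0$ for $0\le s<d$ with the annihilator-propagation step drawn from the short exact sequence (1.3)---is exactly the mechanism underlying those cited results, and it works as written. The second assertion is handled cleanly: each $(0:_A\mH_M^i)\not\subseteq\fp$ for $\fp\in\Min_A(M)$ because $(\mH_M^i)_\fp=0$, whence the product is not contained in any such $\fp$ (primality), and prime avoidance finishes it.

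One small slip: in the sentence ``Since this product is contained in $\prod_{i=-1}^{d-1}r_i$, the latter annihilates $\H^t_\fm(M)$'' the inclusion points the wrong way. What you need (and what is true) is $\prod_{i=-1}^{d-1}r_i\subseteq r_{-1}r_0\cdots r_{t-1}$, since the left-hand side has additional ideal factors; it is this containment that lets you pass from the partial product annihilating $\H^t_\fm(M)$ to the full product doing so. With that inclusion reversed, both parts of your proof go through.
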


 The
following result gives a characterization of a finite module which
admits a uniform local cohomological annihilator in terms of a
certain set of formal fibres of the ground ring.

\begin{thm}\label{B}
 The following statements
are equivalent.
\begin{itemize}
\item[(i)] $\widehat{M}$ is equidimensional $\widehat{A}$--module
and all formal fibres of $A$ over minimal members of
$\emph\Supp_A(M)$ are Cohen-Macaulay.
\item[(ii)] $M$ has a uniform local cohomological annihilator.
\end{itemize}
\end{thm}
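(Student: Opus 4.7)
The plan is to combine Proposition~3.5 with the classical equivalence that a local ring $R$ has a uniform local cohomological annihilator if and only if $\widehat{R}$ is equidimensional (used tacitly in the proof of Lemma~3.8 via \cite[Theorem~3.2]{Z} together with \cite[Theorem~31.7]{Ma}). Applied with $R=A/\fp$, this rephrases (ii) as ``$M$ is locally equidimensional and $\widehat{A}/\fp\widehat{A}$ is equidimensional for every $\fp\in\Min_A(M)$'', so the theorem becomes the task of matching this pair of conditions against (i).

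For (i)$\Rightarrow$(ii), fix $\fp\in\Min_A(M)$ and a minimal prime $\fQ$ of $\widehat{A}/\fp\widehat{A}$; by going-down, $\fQ\cap A=\fp$. Extract a minimal prime $\fQ_0\subseteq\fQ$ of $\widehat{M}$; since $\fQ_0\cap A\in\Min_A(M)$ and $\fQ_0\cap A\subseteq\fp$, minimality of $\fp$ in $\Supp_A(M)$ forces $\fQ_0\cap A=\fp$, so $\fQ_0\supseteq\fp\widehat{A}$, and minimality of $\fQ$ over $\fp\widehat{A}$ gives $\fQ_0=\fQ$. Equidimensionality of $\widehat{M}$ then yields $\dim\widehat{A}/\fQ=d=\dim A/\fp$, so $\widehat{A}/\fp\widehat{A}$ is equidimensional. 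Having this for every $\fp\in\Min_A(M)$, together with $M$ equidimensional (immediate from $\widehat{M}$ equidimensional by going-down), Ratliff's theorem yields $A/(0:_AM)$ universally catenary, and catenarity plus equidimensionality gives $M$ locally equidimensional; Proposition~3.5 concludes. Notably, the formal-fibre hypothesis plays no role in this direction.

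For (ii)$\Rightarrow$(i), \cite[Proposition~2.11]{DJ} and \cite[Theorem~2.14]{DJ} give that $M$ is locally equidimensional and $A/(0:_AM)$ is universally catenary, so Lemma~3.7(b) yields $\widehat{M}$ equidimensional. By Proposition~3.5 and the bridge, $\widehat{A}/\fp\widehat{A}$ is equidimensional for each $\fp\in\Min_A(M)$. The remaining task is to upgrade this equidimensionality to Cohen--Macaulay-ness of the formal fibre at $\fp$. I would aim to show the formal fibre is zero-dimensional (hence trivially Cohen--Macaulay): for any prime $\fQ\subseteq\widehat{A}$ with $\fQ\cap A=\fp$, one extracts a minimal prime $\fQ_0\subseteq\fQ$ of $\widehat{M}$ as in the previous direction, obtaining a minimal prime of $\widehat{A}/\fp\widehat{A}$ below $\fQ$, and then rules out $\fQ_0\subsetneq\fQ$ by a dimension chase.

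The principal obstacle is this last step in (ii)$\Rightarrow$(i), that every prime of $\widehat{A}$ lying over $\fp\in\Min_A(M)$ is minimal in $\widehat{A}/\fp\widehat{A}$. The expected route is to combine the flat dimension formula $\dim\widehat{A}_\fQ=\dim A_\fp+\dim\widehat{A}_\fQ/\fp\widehat{A}_\fQ$ with the catenarity of $\widehat{A}/(0:_A M)\widehat{A}$ inherited from universal catenarity of $A/(0:_A M)$, ensuring that a strict chain $\fQ_0\subsetneq\fQ$ collapses; carrying this out cleanly is the technical crux of the proof.
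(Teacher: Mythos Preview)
Your ``classical equivalence'' that a local ring $R$ has a uniform local cohomological annihilator if and only if $\widehat{R}$ is equidimensional is \emph{false}; only the forward implication holds. The passage you cite from Lemma~3.11 uses precisely that forward implication (ulca $\Rightarrow$ formally equidimensional, hence universally catenary via \cite[Theorem~31.7]{Ma}), never the converse. Indeed, if your bridge were correct, then ``$A/\fp$ has a ulca for all $\fp$'' would be equivalent to ``$A$ is universally catenary'', and Corollary~4.3 would force every universally catenary local ring to have Cohen--Macaulay formal fibres. Example~3.16 exhibits a $2$-dimensional universally catenary local domain with a non--Cohen--Macaulay formal fibre, so your bridge fails.

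This breaks both directions of your plan. In (i)$\Rightarrow$(ii) you deduce that $A/\fp$ has a ulca from the equidimensionality of $\widehat{A}/\fp\widehat{A}$; this is exactly the false direction of the bridge, and your assertion that ``the formal-fibre hypothesis plays no role'' cannot stand (again by Example~3.16: take $M=R/\fp_0$ where $\fp_0$ is the prime with bad formal fibre). In (ii)$\Rightarrow$(i) your proposed target, that the formal fibre over each $\fp\in\Min_A(M)$ is zero-dimensional, is simply false: take $A=\mathbb{C}[x,y]_{(x,y)}$ and $M=A$. Then $A$ is regular, hence trivially has a ulca, yet the formal fibre over $(0)$ contains height-one primes of $\mathbb{C}[[x,y]]$ generated by $y-f(x)$ with $f$ transcendental over $\mathbb{C}(x)$, so it is positive-dimensional. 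No dimension chase will collapse $\fQ_0\subsetneq\fQ$ here.

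The paper's proof genuinely uses the Cohen--Macaulay hypothesis on formal fibres in (i)$\Rightarrow$(ii): it passes to $\widehat{M}$, invokes Kawasaki's finiteness theorem and Theorem~4.1 to locate an element of $\widehat{A}$ outside every minimal prime of $\widehat{M}$ and inside every prime of $\nCM(\widehat{M})$, and then uses the formal-fibre hypothesis to show such an element can be chosen in $A$. For (ii)$\Rightarrow$(i) the paper does not attempt to bound the dimension of the formal fibre; instead it argues by contradiction via Lemma~3.11 (the g.CM criterion) and the attached-prime characterization of Lemma~3.6, producing a minimal prime in $\Att_A(\H^j_\fm(A/\fp))$ and contradicting the ulca of $A/\fp$.
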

\begin{proof}
(i)$\Rightarrow$(ii). By \cite[Theorem 5.5]{K},
$\mathcal{C}_{\widehat{A}}(\widehat{M})$ is finite, which implies
that $\Min($non-$\CM(\widehat{M}))$ is a finite set (see Remark
3.1). Thus Theorem 4.1 implies that
\begin{equation}
\underset{\fq\in \nCM(\widehat{M})} {\cap}\fq \not\subseteq
\underset{\fq\in\Min_{\widehat{A}}(\widehat{M})} {\cup}\fq.
\end{equation}
We show that $(\underset{\fq\in \nCM(\widehat{M})} {\cap}\fq) \cap
A\not\subseteq \underset{\fp\in\Min_A(M)} {\cup}\fp$. Otherwise, by
the finiteness of $\Min(\nCM(\widehat{M}))$ there is
$\fp\in\Min_A(M)$ such that $\fp= \fq\cap A$ for some $\fq\in
\nCM(\widehat{M})$. Note that $A_\fp\longrightarrow
(\widehat{A})_\fq$ is a faithfully flat ring homomorphism and its
fibre ring over $\fp A_\fp$ is $((A_\fp/\fp A_\fp) \otimes_A
\widehat{A})_\fq$ which is Cohen-Macaulay by our assumption.
Therefore, by the standard dimension and depth formulas,  $M_\fp$ is
not Cohen-Macaulay. This contradicts with the fact that
$\dim_{A_\fp}(M_{\fp})= 0$.

Thus we take an element $r\in(\underset{\fq\in \nCM(\widehat{M})}
{\cap}\fq)\cap A\setminus \underset{\fp\in\Min_A(M)} {\cup}\fp$. By
Theorem 4.1, $r^n\H^i_{\widehat{\fm}}(\widehat{M})= 0$ for some
positive integer $n$ and for all $0\leq i< \dim_A(M)$. As
$\H_{\widehat{\fm}}^i(\widehat{M})\cong\H_\fm^i(M)$ the claim
follows.

(ii)$\Rightarrow$(i). As $M$ has a uniform local cohomological
annihilator, $\widehat{M}$ admits a uniform local cohomological
annihilator too so that $\widehat{M}$ is equidimensional. Assume
that $\fp\in\Min_A(M)$. We are going to show that $(A_\fp/\fp
A_\fp)\otimes_A \widehat{A}$ is Cohen-Macaulay. It is well known
that $(A_\fp/\fp A_\fp)\otimes_A \widehat{A}\cong
S^{-1}(\widehat{A}/\fp \widehat{A})$, where $S$ is the image of
$A\setminus \fp$ in $\widehat{A}$. Therefore we are going to show
that $(S^{-1}(\widehat{A}/\fp \widehat{A}))_{S^{-1}\fq}$ is
Cohen-Macaulay for all $\fq\in \Spec \widehat{A}$ with $S\cap\fq=
\emptyset$. It is enough to show that $(\widehat{A}/\fp
\widehat{A})_\fq$ is Cohen-Macaulay $\widehat{A}_\fq$--module. By
Proposition 3.5, $A/\fp $ has a uniform local cohomological
annihilator and hence $\widehat{A}/\fp \widehat{A}$ has a uniform
local cohomological annihilator which, in particular, implies that
$\widehat{A}/\fp \widehat{A}$ is equidimensional. Assume,
contrarily, $(\widehat{A}/\fp \widehat{A})_\fq$ is not
Cohen-Macaulay. We may assume that $\fq\in\Min(\nCM(\widehat{A}/\fp
\widehat{A}))$ and that $(\fq\cap A)\cap(A\setminus\fp)= \emptyset$.
In other words, $\nCM((\widehat{A}/\fp \widehat{A})_\fq)= \{\fq
\widehat{A}_\fq\}$ and $\fq\cap A= \fp$.

Let us replace $A$ and $M$ in Lemma 3.11 by $\widehat{A}_\fq$ and
$(\widehat{A}/\fp \widehat{A})_\fq$, respectively. As
$\mathcal{C}_{\widehat{A}}(\widehat{A}/\fq')$ is finite (c.f.
\cite[Theorem 5.5]{K}) for all $\fq'\in\Spec \widehat{A}$ so that
$\mathcal{C}_{\widehat{A}_\fq}(\widehat{A}_\fq/\fq'\widehat{A}_\fq)$
is finite. Therefore $\widehat{A}_\fq/\fq'\widehat{A}_\fq$ has a
uniform local cohomological annihilator as $\widehat{A}_\fq$--module
 for all $\fq'\in\Spec \widehat{A}$ with $\fq'\subseteq\fq$ (c.f. \cite[Theorem 2.7]{DJ}). As $(\widehat{A}/\fp
\widehat{A})_\fq$ is equidimensional, we can apply Lemma 3.11 to
deduce that $(\widehat{A}/\fp \widehat{A})_\fq$ is a g.CM as
$\widehat{A}_\fq$--module. In particular,
$\H_{\fq\widehat{A}_\fq}^i((\widehat{A}/\fp \widehat{A})_\fq))$ is a
non--zero  $\widehat{A}_\fq$--module of finite length for some
$i<\dim(\widehat{A}/\fp \widehat{A})_\fq$ for which we get
$\Att_{\widehat{A}_\fq}(\H_{\fq\widehat{A}_\fq}^i((\widehat{A}/\fp
\widehat{A})_\fq))= \{\fq\widehat{A}_\fq\}$. Now, \cite[Exercise
11.3.8]{BS} implies that
$\fq\in\Att_{\widehat{A}}(\H_{\widehat{\fm}}^j(\widehat{A}/\fp\widehat{A}))$
for some $j< \dim A/\fp$ which gives $\fp= \fq\cap
A\in\Att_A(\H_\fm^j(A/\fp))$. This contradicts with Lemma 3.6.
\end{proof}

The following result shows that if all formal fibres of a ring $A$
are Cohen-Macaulay then $A$ is universally catenary if and only
$\mathcal{C}_A(A/\fp)$ is finite for all $\fp\in\Spec A$.
\begin{cor} \label{C}
The following statements are equivalent.
\begin{itemize}
\item[(i)] $A$ is universally catenary  ring and all of its formal fibres are Cohen-Macaulay.
\item[(ii)] The Cousin complex $\mathcal{C}_{A}(A/\fp)$ is finite for all $\fp\in\emph\Spec(A)$.
\item[(iii)]  $A/\fp$ has
a uniform local cohomological annihilator for all
$\fp\in\emph\Spec(A)$.
\end{itemize}
\end{cor}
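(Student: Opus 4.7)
The plan is to close the cycle (i)$\Rightarrow$(ii)$\Rightarrow$(iii)$\Rightarrow$(i), leaning on Theorem \ref{B} for the one substantive direction, on Kawasaki's \cite[Theorem 5.5]{K} for the forward step, and on \cite[Theorem 2.7]{DJ} for the middle link.

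For (i)$\Rightarrow$(ii), I would fix $\fp\in\Spec(A)$ and note that, since $A$ is universally catenary with Cohen--Macaulay formal fibres, both properties are inherited by the quotient $A/\fp$: universal catenarity passes to quotients, and the formal fibres of $A/\fp$ are obtained from those of $A$ by further quotienting, hence remain Cohen--Macaulay. Being an integral domain, $A/\fp$ is equidimensional as a module over itself. Kawasaki's \cite[Theorem 5.5]{K} then yields that the Cousin complex $\mathcal{C}_{A/\fp}(A/\fp)$ is finite, and this complex agrees with $\mathcal{C}_A(A/\fp)$ because the $A$--height filtration of $\Supp_A(A/\fp)$ is carried onto the $A/\fp$--height filtration by the homeomorphism $\V(\fp)\cong\Spec(A/\fp)$.

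The implication (ii)$\Rightarrow$(iii) is immediate from \cite[Theorem 2.7]{DJ}: finiteness of $\mathcal{C}_A(A/\fp)$ produces a uniform local cohomological annihilator on $A/\fp$.

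The main content sits in (iii)$\Rightarrow$(i). I would apply Theorem \ref{B} with $M=A/\fp$ for each $\fp\in\Spec(A)$. Since $A/\fp$ is a domain, $\Min_A(A/\fp)=\{\fp\}$, so the theorem simultaneously supplies equidimensionality of $\widehat{A}/\fp\widehat{A}$ and Cohen--Macaulayness of the formal fibre of $A$ at $\fp$. Letting $\fp$ vary over $\Spec(A)$, every formal fibre of $A$ is Cohen--Macaulay and every $A/\fp$ is formally equidimensional. By Ratliff's criterion \cite[Theorem 31.7]{Ma}---the very characterization invoked at the start of the proof of Lemma \ref{3.8}---formal equidimensionality of all $A/\fp$ is equivalent to $A$ being universally catenary, yielding (i). The essential difficulty has been absorbed into Theorem \ref{B}; the only care required here is verifying that the hypotheses in (i) are inherited by the quotients $A/\fp$ and the small identification of Cousin complexes in Step~1, both of which are routine.
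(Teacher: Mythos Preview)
Your proof is correct and follows essentially the same route as the paper: (i)$\Rightarrow$(ii) via Kawasaki's \cite[Theorem 5.5]{K}, (ii)$\Rightarrow$(iii) via \cite[Theorem 2.7]{DJ} (the paper packages this as Theorem~\ref{A}), and (iii)$\Rightarrow$(i) via Theorem~\ref{B} together with Ratliff's criterion \cite[Theorem 31.7]{Ma}. The only cosmetic difference is that in (i)$\Rightarrow$(ii) you pass to the quotient ring $A/\fp$ and then identify Cousin complexes, whereas the paper applies Kawasaki's theorem directly to the $A$--module $A/\fp$ (which is trivially equidimensional since $\Min_A(A/\fp)=\{\fp\}$); both are valid, and in fact the formal fibres of $A/\fp$ at primes above $\fp$ coincide with those of $A$, so your ``further quotienting'' is actually an isomorphism.
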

\begin{proof}
(i)$\Rightarrow$(ii) is clear by \cite[Theorem 5.5]{K}.

(ii)$\Rightarrow$(iii) is clear by Theorem 4.1.

(iii)$\Rightarrow$(i). To see $A$ is universally catenary we may
argue as the beginning of the proof of Lemma 3.11. The rest is clear
by Theorem 4.2.

\end{proof}

Our final goal is to investigate the relationship between
non-$\CM(M)$ and $\V(\mathrm{a}(M))$. We will see that non-$\CM(M)=
\V(\mathrm{a}(M))$ whenever $\mathcal{C}_A(M)$ is finite. However,
the following statement holds true whenever $\mathcal{C}_A(M)$ is
not finite.

\begin{rem}\label{E}
  non--$\CM(M)\subseteq \V(\mathrm{a}(M))$.
\end{rem}
\begin{proof} Let $d:=\dim M$. Choose  $\fp\in $non--$\CM(M)$ and assume contrarily
that $\mathrm{a}(M)\nsubseteq \fp$. Then by \cite[Theorem 9.3.5]{BS}
$$d=f^{\mathrm{a}(M)}_\fm(M)\leq
\lambda^{\mathrm{a}(M)}_\fm(M)\leq \depth M_\fp+\dim A/\fp \leq \dim
M_\fp+\dim A/\fp\leq d,$$ where $$f^{\mathrm{a}(M)}_\fm(M)=
\inf\{i\in\Bbb{N}:
\mathrm{a}(M)\not\subseteq\sqrt{(0:_A\H^i_\fm(M))}\}$$ and
$$\lambda^{\mathrm{a}(M)}_\fm(M)=\inf\{\depth M_\fp +\dim A/\fp:
\fp\in\Spec A\setminus \V(\mathrm{a}(M))\}.$$ Hence $\depth
M_\fp=\dim M_\fp$ that is contradiction.
\end{proof}

\begin{cor}\label{F}
Assume that $M$ is a finite $A$--module of dimension $d$ and that
$\mathcal{C}_A(M)$ is finite.  Then

\centerline{$\V(\prod_{i= -1}^{d-1}(0 :_A
\mathcal{H}_M^i))=$non--$\CM(M)= \V(\mathrm{a}(M))$.}
\end{cor}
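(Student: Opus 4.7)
The result is essentially a three-way chain of inclusions that collapses into equalities once the preceding results in the excerpt are combined. My plan is to string together Remark 3.1, Theorem 4.1, and Remark 4.4 in the right order.

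First I would invoke Remark 3.1, which, under the finiteness of $\mathcal{C}_A(M)$, already gives
\[
\mathrm{non\text{-}}\CM(M) = \V\!\Big(\prod_{i\geq -1}(0:_A \mathcal{H}_M^i)\Big).
\]
Since $\mathcal{H}_M^i$ is supported in $\Supp_A(M)$ and vanishes in degrees outside the relevant range (in particular $\mathcal{H}_M^i = 0$ for $i \geq d$), this product can be truncated to $\prod_{i=-1}^{d-1}(0:_A \mathcal{H}_M^i)$, which already yields the first equality in the statement.

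Next I would establish the sandwich of ideals
\[
\prod_{i=-1}^{d-1}(0:_A \mathcal{H}_M^i) \;\subseteq\; \mathrm{a}(M),
\]
which is precisely the inclusion recorded in Theorem 4.1 (the first inclusion there holds without any hypothesis on $\mathcal{C}_A(M)$). Passing to varieties, this gives $\V(\mathrm{a}(M)) \subseteq \V(\prod_{i=-1}^{d-1}(0:_A \mathcal{H}_M^i)) = \mathrm{non\text{-}}\CM(M)$.

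Finally, Remark 4.4 supplies the reverse inclusion $\mathrm{non\text{-}}\CM(M) \subseteq \V(\mathrm{a}(M))$, again with no finiteness hypothesis. Combining the two inclusions gives $\mathrm{non\text{-}}\CM(M) = \V(\mathrm{a}(M))$, which together with the first equality completes the chain of three equalities. There is no real obstacle here — every input has already been proved in the excerpt, so the ``proof'' is just the observation that the three ideals $\prod_i(0:_A \mathcal{H}_M^i)$, $\mathrm{a}(M)$, and any defining ideal of $\mathrm{non\text{-}}\CM(M)$ cut out the same closed subset of $\Spec A$ once $\mathcal{C}_A(M)$ is assumed to have finite cohomology.
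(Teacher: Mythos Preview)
Your proposal is correct and follows essentially the same approach as the paper: the paper's own proof also invokes Remark~3.1 for the first equality and then combines Remark~4.4 (the inclusion $\mathrm{non\text{-}}\CM(M)\subseteq\V(\mathrm{a}(M))$) with Theorem~4.1 (the ideal inclusion $\prod_i(0:_A\mathcal{H}_M^i)\subseteq\mathrm{a}(M)$, giving the reverse containment on varieties) for the second. Your extra remark about truncating the product is harmless, since in fact $\mathcal{H}_M^i=0$ for $i\geq d-1$ by the bound $\dim_A(\mathcal{H}_M^i)\leq d-i-2$ recalled in the paper.
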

\begin{proof}
The first equality is in Remark 3.1. The second inequality is clear
by Remark \ref{E} and  Theorem \ref{A}.
\end{proof}

For our final goal, i.e. to characterize those modules $M$ with
non--$\CM(M)= \V(\mathrm{a}(M))$, we need to characterize those
prime ideals $\fp$ such that $A/\fp$ has a uniform local
cohomological annihilator (see also Corollary 4.3).

\begin{prop}\label{G}
Assume that $\fp\in\emph\Spec A$. A necessary and sufficient
condition for  $A/\fp$ to have a uniform local cohomological
annihilator is that there exists an equidimensional $A$--module $M$
such that
 $\fp\in\emph\Supp_A(M)\setminus\V(\mathrm{a}(M))$.
\end{prop}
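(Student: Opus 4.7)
For the forward implication, take $M:=A/\fp$. Since $A/\fp$ is a domain, $M$ is equidimensional with $\Min_A(M)=\{\fp\}$, and $\fp\in\Supp_A(M)$. If $x$ is a uniform local cohomological annihilator of $A/\fp$, then $x\in A\setminus\fp$ and $x\H^i_\fm(A/\fp)=0$ for every $i<\dim A/\fp$, so $x\in\mathrm{a}(M)\setminus\fp$; hence $\fp\notin\V(\mathrm{a}(M))$.

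For the converse, fix $y\in\mathrm{a}(M)\setminus\fp$, giving $y\H^i_\fm(M)=0$ for all $i<d:=\dim M$. The plan is to verify the characterization from Theorem \ref{B} for $A/\fp$, i.e., that $\widehat{A/\fp}$ is equidimensional and the formal fibre of $A$ over $\fp$ is Cohen-Macaulay. Passing to $\widehat A$, the element $y$ still annihilates $\H^i_{\widehat\fm}(\widehat M)$ for $i<d$ and avoids every $\fQ\in\Spec\widehat A$ lying over $\fp$. To check the formal fibre condition, I would argue by contradiction along the lines of the proof of Theorem \ref{B}$\,$(ii)$\Rightarrow$(i): assuming $(\widehat A/\fp\widehat A)_\fQ$ fails to be Cohen-Macaulay at some $\fQ$ over $\fp$ chosen minimal with this property, Kawasaki's theorem combined with Theorem \ref{A} guarantees that $\widehat A_\fQ/\fq'\widehat A_\fQ$ has a uniform local cohomological annihilator for every $\fq'\subseteq\fQ$. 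Lemma \ref{3.8} applied to $\widehat A_\fQ$ and $(\widehat A/\fp\widehat A)_\fQ$ then forces the latter to be generalized Cohen-Macaulay; an application of the Weak General Shifted Localization Principle \cite[Exercise 11.3.8]{BS} produces $\fp\in\Att_A\H^j_\fm(A/\fp)$ for some $j<\dim A/\fp$.

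The hard part is to convert this attachment into a contradiction with $y\notin\fp$. Since $\fp\in\Supp_A(M)$ and $M$ is equidimensional, choose $\fq\in\Min_A(M)$ with $\fq\subseteq\fp$, so $y\notin\fq$ and $A/\fq$ embeds in $M$ (as $\fq\in\Ass_A(M)$). Combining this embedding with the surjection $A/\fq\twoheadrightarrow A/\fp$ and tracking attached primes through the resulting long exact sequences of $\fm$-local cohomology, together with Proposition \ref{ulc}, I expect to produce an attached prime in $\Att_A\H^{j'}_\fm(M)$ containing $\fp$ for some $j'<d$; then $y\H^{j'}_\fm(M)=0$ forces $y\in\fp$, the desired contradiction. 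Establishing equidimensionality of $\widehat{A/\fp}$ proceeds by a parallel completion argument. The principal technical obstacle lies in this attached-prime transfer step, where equidimensionality of $M$ (to keep dimensions aligned) and the structure of $\mathrm{a}(M)$ as the intersection of annihilators of the local cohomology modules of $M$ both play crucial roles.
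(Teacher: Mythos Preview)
Your forward implication is fine and matches the paper.

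For the converse, however, there is a genuine gap. You yourself isolate the crux: from $\fp\in\Att_A\H^j_\fm(A/\fp)$ you need to produce some $\fp'\supseteq\fp$ in $\Att_A\H^{j'}_\fm(M)$ with $j'<d$. The mechanism you propose---embedding $A/\fq\hookrightarrow M$ for some $\fq\in\Min_A(M)$ with $\fq\subseteq\fp$, and combining with $A/\fq\twoheadrightarrow A/\fp$---does not give this. Attached primes behave well only in the pattern $\Att(K)\subseteq\Att(N)\subseteq\Att(L)\cup\Att(K)$ for a short exact sequence $0\to L\to N\to K\to 0$ of Artinian modules; the long exact sequences you obtain are not of this form, and there is no direct way to push an attached prime of $\H^j_\fm(A/\fp)$ forward to one of $\H^{j'}_\fm(M)$, since $A/\fp$ is not a subquotient of $M$ in any useful sense (it is a quotient of a submodule, and the intervening kernel $\fp/\fq$ can have arbitrary local cohomology). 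Moreover, to invoke Lemma~\ref{3.8} for $(\widehat A/\fp\widehat A)_\fQ$ you already need equidimensionality of $\widehat A/\fp\widehat A$, which is part of what you are trying to establish; the ``parallel completion argument'' you allude to is not specified, and in the proof of Theorem~\ref{B}(ii)$\Rightarrow$(i) that step used the hypothesis that $A/\fp$ has a uniform local cohomological annihilator---precisely the conclusion here.

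The paper's proof avoids completions and formal fibres entirely. It argues by induction on $h=\h_M(\fp)$. For $h=0$ one takes a submodule $N\subseteq M$ with $\Ass_A(N)=\{\fp\}$; since $(M/N)_\fp=0$, some $r\in A\setminus\fp$ kills $M/N$, and then $rs$ (with $s\in\mathrm{a}(M)\setminus\fp$) annihilates $\H^i_\fm(N)$ for $i<\dim N$, so Proposition~\ref{ulc} applies. For $h>0$ one first passes to a submodule $K$ with $\Ass_A(K)=\Min_A(M)$, shows $\mathrm{a}(K)\not\subseteq\fp$, then uses that $M_\fp\cong K_\fp$ is Cohen--Macaulay (Remark~\ref{E}) to find a $K$-regular element $x\in\fp$; the short exact sequence $0\to K\stackrel{x}{\to}K\to K/xK\to 0$ gives $\mathrm{a}(K)^2\subseteq\mathrm{a}(K/xK)$, and induction on $\h_{K/xK}(\fp)<h$ finishes. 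This is more elementary and sidesteps the transfer problem altogether.
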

\begin{proof}
The necessary condition is clear by taking $M:=A/\fp$. For the
converse, assume that there is an equidimensional $A$--module $M$
such that $\fp\in\Supp_A(M)\setminus\V(\mathrm{a}(M))$. We prove the
claim by induction on $h:= \h_M(\fp)$. When $h=0$, we have
$\fp\in\Min_A(M)$. Choose a submodule $N$ of $M$ with $\Ass_A(N)=
\{\fp\}$ and $\Ass_A(M/N)=\Ass_A(M)\setminus\{\fp\}$. It is clear
that $(M/N)_\fp =0$ so that $r(M/N)= 0$ for some $r\in
A\setminus\fp$. On the other hand the fact that
$\mathrm{a}(M)\not\subseteq\fp$ implies that there is $s\in
A\setminus\fp$ such that $s\H_\fm^i(M)= 0$ for all $i<\dim_A(M)$.
The exact sequence
$\H_\fm^{i-1}(M/N)\longrightarrow\H_\fm^i(N)\longrightarrow\H_\fm^i(M)$
implies $rs\H_\fm^i(N)= 0$ for all $i<\dim_A(N)$. As
$\fp\in\Min_A(N)$, $A/\fp$ has a uniform local cohomological
annihilator by Proposition \ref{ulc}.

Now assume that $h>0$. For any $\fq\in\Supp_A(M)$ with
$\fq\subset\fp$ we have $\fq\not\in\V(\mathrm{a}(M))$ so that
$A/\fq$ has a uniform local annihilator by induction hypothesis. As
$\fp\not\in\V(\mathrm{a}(M))$, $M_\fp$ is Cohen--Macaulay by Remark
\ref{E}. Choose a submodule $K$ of $M$ with $\Ass_A(K)= \Min_A(M)$
and $\Ass_A(M/K)= \Ass_A(M)\setminus\Min_A(M)$. If
$\fp\in\Supp_A(M/K)$ then there is $\fq\in\Ass_A(M/K)$ with
$\fq\subseteq\fp$. Therefore $\fq\in\Ass_A(M)$ and
$\fq\not\in\Min_A(M)$. As $M_\fp$ is Cohen-Macaulay, so is $M_\fq$
which gives $\fq\in\Min_A(M)$, which is a contradiction. Hence we
have $\fp\not\in\Supp_A(M/K)$ which yields $r(M/K)=0$ for some $r\in
A\setminus\fp$ and so, by applying local cohomology to the exact
sequence $0\longrightarrow K\longrightarrow M\longrightarrow
M/K\longrightarrow 0$, it follows that
$\mathrm{a}(K)\not\subseteq\fp$. As $M_\fp\cong K_\fp$, $K_\fp$ is
Cohen--Macaulay and $\h_K(\fp)>0$, there is $x\in\fp$ which is
non--zero divisor on $K$. The exact sequence $0\longrightarrow
K\overset{x}{\longrightarrow} K\longrightarrow K/xK\longrightarrow
0$ implies that $\mathrm{a}(K)^2\subseteq\mathrm{a}(K/xK)$ which
implies that $\mathrm{a}(K/xK)\not\subseteq\fp$. As
$\h_{K/xK}(\fp)<h$, $A/\fp$ has a uniform local cohomological
annihilator by the induction hypothesis.
\end{proof}

In Corollary \ref{F}, it is shown that, for a finite $A$--module
$M$, non--$\CM(M)= \V(\mathrm{a}(M))$ whenever $\mathcal{C}_A(M)$ is
finite. In the following we characterize those modules satisfying
this equality without assuming that the Cousin complex of $M$ to be
finite.


\begin{thm}
For an equidimensional $A$--module $M$, the following statements are
equivalent.
\begin{itemize}
\item[(i)] $A/\fq$ has a uniform local cohomological annihilator for
all $\fq\in\emph\CM(M)$.
\item[(i$'$)] $\widehat{A}/\fq\widehat{A}$ is equidimensional
$\widehat{A}$--module and the formal fibre ring $(A_\fq /\fq
A_\fq)\otimes_A \widehat{A}$ is Cohen-Macaulay for all
$\fq\in\emph\CM(M)$.
\item[(ii)] non--$\emph\CM(M)= \V(\mathrm{a}(M))$.
\item[(iii)] non--$\emph\CM(M)\supseteq \V(\mathrm{a}(M))$.
\end{itemize}
\end{thm}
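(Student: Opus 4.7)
The plan is to handle the routine equivalences first and concentrate on (i)$\Rightarrow$(iii), which is the substantive direction. For (i)$\iff$(i$'$), apply Theorem \ref{B} with $A/\fq$ in place of $M$: the module $A/\fq$ is a domain, hence equidimensional; its completion is $\widehat{A}/\fq\widehat{A}$; the only minimal member of $\Supp_A(A/\fq)$ is $\fq$; and the corresponding formal fibre is $(A_\fq/\fq A_\fq)\otimes_A\widehat{A}$. Remark \ref{E} supplies non--$\CM(M)\subseteq\V(\mathrm{a}(M))$ unconditionally, so (ii)$\iff$(iii) reduces to the trivial inclusion. For (iii)$\Rightarrow$(i), fix $\fq\in\CM(M)$ (without loss in $\Supp_A(M)$); (iii) forces $\mathrm{a}(M)\not\subseteq\fq$, and Proposition \ref{G} applied with $M$ itself as witness yields that $A/\fq$ has a uniform local cohomological annihilator.

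The core argument is (i)$\Rightarrow$(iii). Since a zero-dimensional localization of a finite module is Cohen-Macaulay, $\Min_A(M)\subseteq\CM(M)$, so (i) supplies for each $\fp\in\Min_A(M)$ a uniform local cohomological annihilator for $A/\fp$; by Theorem \ref{B} this makes $\widehat{A}/\fp\widehat{A}$ equidimensional. The proof of Lemma \ref{eq}(b) uses formal equidimensionality only at primes in $\Min_A(M)$, so combined with $M$ equidimensional it yields $\widehat{M}$ equidimensional as $\widehat{A}$-module. Kawasaki's \cite[Theorem 5.5]{K} then gives $\mathcal{C}_{\widehat{A}}(\widehat{M})$ finite, and Corollary \ref{F} applied over $\widehat{A}$ delivers non--$\CM(\widehat{M})=\V(\mathrm{a}(\widehat{M}))$ in $\Spec\widehat{A}$.

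The next ingredient is the identity $\mathrm{a}(\widehat{M})=\mathrm{a}(M)\widehat{A}$. Each $\H^i_\fm(M)$ is artinian, hence annihilated by a power of $\fm$, so its $A$-module structure extends uniquely to an $\widehat{A}$-module structure and $(0:_A\H^i_\fm(M))\widehat{A}=0:_{\widehat{A}}\H^i_{\widehat{\fm}}(\widehat{M})$; faithful flatness of $\widehat{A}/A$ commutes with finite intersections and yields the identity. Now suppose for contradiction that $\fp\in\V(\mathrm{a}(M))\cap\CM(M)$, and pick $\fP\in\Spec\widehat{A}$ with $\fP\cap A=\fp$. Then $\mathrm{a}(\widehat{M})=\mathrm{a}(M)\widehat{A}\subseteq\fp\widehat{A}\subseteq\fP$, i.e., $\fP\in\V(\mathrm{a}(\widehat{M}))$. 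On the other hand, (i) supplies a uniform local cohomological annihilator for $A/\fp$, so by Theorem \ref{B} the formal fibre $(A_\fp/\fp A_\fp)\otimes_A\widehat{A}$ is Cohen-Macaulay; the depth/dimension formula for the faithfully flat local map $A_\fp\to\widehat{A}_\fP$, whose closed fibre is a localization of this Cohen-Macaulay formal fibre, together with $M_\fp$ Cohen-Macaulay shows $\widehat{M}_\fP$ is Cohen-Macaulay. Hence $\fP\in\CM(\widehat{M})\cap\V(\mathrm{a}(\widehat{M}))$, contradicting the equality of the previous paragraph.

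The main obstacle is this descent from $\widehat{A}$ back to $A$, and it is resolved precisely by the identity $\mathrm{a}(\widehat{M})=\mathrm{a}(M)\widehat{A}$. Without the artinianness of local cohomology one would only have $\mathrm{a}(M)\widehat{A}\subseteq\mathrm{a}(\widehat{M})$, and the contradiction in the final step could not be closed, since $\mathrm{a}(\widehat{M})$ might fail to fall into $\fP$ even when $\mathrm{a}(M)$ falls into $\fp$.
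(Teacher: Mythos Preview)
Your treatment of (i)$\Leftrightarrow$(i$'$), (ii)$\Leftrightarrow$(iii), and (iii)$\Rightarrow$(i) matches the paper's. The substantive divergence is in (i)$\Rightarrow$(iii), and there your argument has a genuine gap.

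The paper does not lift $\fp$ itself to $\Spec\widehat{A}$. Instead, from $\fp\supseteq\mathrm{a}(M)$ it extracts some $\fq\in\Att_A(\H^i_\fm(M))$ with $\fq\subseteq\fp$, and then uses Sharp's result \cite[Lemma 2.1]{S5} to write $\fq=\fQ\cap A$ for a \emph{specific} $\fQ\in\Att_{\widehat{A}}(\H^i_{\widehat{\fm}}(\widehat{M}))$. Since $\fQ$ is an attached prime, it automatically contains $0:_{\widehat{A}}\H^i_{\widehat{\fm}}(\widehat{M})\supseteq\mathrm{a}(\widehat{M})$, so $\fQ\in\V(\mathrm{a}(\widehat{M}))=\mbox{non--}\CM(\widehat{M})$; the contradiction is then obtained at $\fq$ (if $\fq\in\CM(M)$, the formal--fibre argument makes $\widehat{M}_\fQ$ Cohen--Macaulay), and one concludes $\fp\in\mbox{non--}\CM(M)$ because $\fq\subseteq\fp$.

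Your route instead lifts $\fp$ to an \emph{arbitrary} $\fP$ over $\fp$ and asserts $\fP\in\V(\mathrm{a}(\widehat{M}))$ via the identity $\mathrm{a}(\widehat{M})=\mathrm{a}(M)\widehat{A}$. That identity is where the argument breaks. First, a small slip: $\H^i_\fm(M)$ is not ``annihilated by a power of $\fm$'' unless it has finite length; it is only $\fm$--torsion. More importantly, even granting the natural $\widehat{A}$--structure on an artinian $A$--module $N$, the equality $(0:_{\widehat{A}}N)=(0:_A N)\widehat{A}$ is \emph{false} in general: take $A$ a local domain whose completion has a nonzero minimal prime $P$ (as in Example~3.16 or Nagata's examples), and set $N=(0:_{E_{\widehat{A}}(k)}P)=\Hom_{\widehat{A}}(\widehat{A}/P,E_{\widehat{A}}(k))$. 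Then $N$ is artinian over $A$, $\Ann_{\widehat{A}}(N)=P$, but $\Ann_A(N)=P\cap A=0$. The direction you need, $\mathrm{a}(\widehat{M})\subseteq\mathrm{a}(M)\widehat{A}$, is precisely the hard one, and your justification (``faithful flatness \dots\ commutes with finite intersections'') only handles the intersection over $i$, not the per--$i$ annihilator equality. Without this, an arbitrary $\fP$ over $\fp$ need not lie in $\V(\mathrm{a}(\widehat{M}))$, and the contradiction does not close. The attached--prime manoeuvre in the paper is exactly what circumvents this.
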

\begin{proof}
The equivalence of (i) and (i$'$) is the subject of Theorem \ref{B}.

(i)$\Longrightarrow$(ii). The inclusion
non-$\CM(M)\subseteq\V(\mathrm{a}(M))$ is clear by Remark \ref{E}.

Now assume that $\fp\supseteq\mathrm{a}(M)$. Thus there is an
integer $i$, $0\leq i< d$, such that $\fp\supseteq 0:_A
\H_\fm^i(M)$. There is
$\fQ\in\Att_{\widehat{A}}(\H_{\widehat{\fm}}^i(\widehat{M}))$ with
$\fq:=A\cap\fQ\in\Att_A(\H_\fm^i(M))$  and $\fp\supseteq\fq$. To
show $\fp\in$non--$\CM(M)$ it is enough to show that
$\fq\in$non--$\CM(M)$. Assuming contrarily, $\fq\in\CM(M)$, $A/\fq$
has a uniform local cohomological annihilator by our assumption and
so the formal fibre $k(\fq)\otimes_A \widehat{A}$ is Cohen-Macaulay,
by Theorem \ref{B}. As the map $A_\fq\longrightarrow
\widehat{A}_\fQ$ is faithfully flat ring homomorphism, we find that
$k(\fq)\otimes_{A_\fq}\widehat{A}_\fQ$ is also Cohen-Macaulay.
Therefore the standard dimension and depth formulas, applied to the
faithfully flat extension $A_\fq\longrightarrow \widehat{A}_\fQ$,
implies that $\widehat{M}_\fQ$ is Cohen-Macaulay. On the other hand,
$A/\fr$ has a uniform local cohomological annihilator for all
$\fr\in\Min_A(M)$ (simply because in this case $M_\fr$ has zero
dimension and so $\fr\in\CM(M)$). Thus, by Proposition \ref{ulc},
$M$ has a uniform local cohomological annihilator and so does
$\widehat{M}$. Therefore, by \cite[Corollary 2.12]{DJ},
$\widehat{M}$ is equidimensional. Thus \cite[Theorem 5.5]{K} implies
that the Cousin complex $\mathcal{C}_{\widehat{A}}(\widehat{M})$ is
finite. As
$\fQ\in\Att_{\widehat{A}}(\H_{\widehat{\fm}}^i(\widehat{M}))$, we
have, by Corollary \ref{F}, $\fQ\in$non--$\CM(\widehat{M})$. This is
a contradiction.

(iii)$\Longrightarrow$(i). Assume that $\fq\in\CM(M)$ so that
$\fq\not\supseteq\mathrm{a}(M)$ by our assumption. Now Proposition
\ref{G} implies that $A/\fq$ has a uniform local cohomological
annihilator.
\end{proof}
{\bf Acknowledgment.} The authors would like to express their
gratitude to the referee for giving several comments and
suggestions.

\end{document}